\definecolor{lightgray}{rgb}{0.9,0.9,0.9} %
\newtheorem{thm}{Theorem}[section]
\newtheorem{lem}{Lemma}[section]
\renewenvironment{proof}[1][\proofname]{%
  \par\pushQED{\qed}\normalfont%
  \topsep6\p@\@plus6\p@\relax
\trivlist\item[\hskip\labelsep\bfseries#1\@addpunct{.}]%
  \ignorespaces
}{%
  \popQED\endtrivlist\@endpefalse
}
\begin{document}
\begin{center}
    \textbf{\large INVARIANCE ANALYSIS, SYMMETRY REDUCTION AND CONSERVATION LAWS FOR A BIOLOGICAL POPULATION MODEL IN POROUS MEDIA}
\end{center}
\begin{center}
\author[{\small \textbf{Urvashi Joshi}\textsuperscript{1}, \textbf{Aniruddha Kumar Sharma}\textsuperscript{2}, \textbf{Rajan Arora}\textsuperscript{3}\\ \textsuperscript{1,2,3} \text{Department of Applied Mathematics and Scientific Computing,}\\ \text{Indian Institute of Technology, Roorkee, India}}\\
\end{center}
\begin{center}
\textsuperscript{1}urvashi\textunderscore j@amsc.iitr.ac.in,
\textsuperscript{2}aniruddha\textunderscore s@as.iitr.ac.in, \\\textsuperscript{3}rajan.arora@as.iitr.ac.in.\textbf{(Corresponding Author)}
\end{center}
\section*{Abstract}
\quad \quad This research paper talks about using complex mathematical tools to study and figure out the behavior of biological populations in porous media. Porous media offer a unique environment where various factors, including fluid flow and nutrient diffusion, significantly influence population dynamics. The theory of Lie symmetries is used to find inherent symmetries in the governing equation of the population model, helping to find conservation laws and invariant solutions. The derivation and analysis of the optimal system provide insights into the most influential parameters affecting population growth and distribution. Furthermore, the study explores the construction of invariant solutions, which aid in characterizing long-term population behavior. The article concludes with the non-linear self-adjointness property and conservation laws for the model.

\textbf{Keywords:}\hspace{0.1cm}Biological population model, Porous media, Lie symmetric analysis, Symmetric group, Optimal subalgebra, Parameter analysis, Self-adjointness, Conservation laws.
\section{Introduction}
\quad \quad In many scientific and technical domains, including biology, fluid dynamics, heat transfer, structural mechanics, electromagnetics, image processing, and optimization, partial differential equations (PDEs), are essential tools for modeling and resolving a broad variety of real-world issues. Mathematical models in biology are utilized to depict and examine several biological phenomena, including population dynamics, disease transmission, and cellular function. These models are commonly constructed using ordinary differential equations (ODEs) and partial differential equations (PDEs), which are employed to depict the rates of change of biological variables over time and space.

Lie symmetry methods (\cite{bluman1989potential,ibragimov1995crc,olver1993applications,ovsiannikov2014group}) have proven effective in addressing a diverse array of issues in physics, engineering, and biology, such as the study of population dynamics in different contexts (\cite{maurya2024symmetry,simon2023optimal,bira2016application,bira2018exact,sharma2023invariance}). The existence of symmetries in the fundamental equation is the foundation of these methods, which can be utilized to simplify the problem and provide exact solutions. The model takes into account the impact of regional heterogeneity, diffusion, and nonlinear interactions among microbes, which are typical characteristics of population dynamics in the real world. The Lie symmetry approach is employed to ascertain the symmetries of the model and simplify the problem into ordinary differential equations (ODEs) that may be solved by analytical means. The resultant solutions offer vital insights into the population's behavior.

Finding all group-invariant solutions to a PDE using Lie algebra is a challenging task due to the number of subalgebras and symmetry reductions. Ovsiannikov \cite{ovsiannikov2014group} and Olver (\cite{olver1993applications,olver1987group}) introduced an ideal system for the Lie subalgebra, while Petera et al. \cite{patera1975continuous} advanced the method to analyze various examples of an optimal system for the Lie group in mathematical physics. Recently, researchers have applied this refined and expanded approach to develop optimal systems essential to finding invariant solutions to given PDE systems (\cite{devi2021optimal,singh20222+,chauhan2020lie,bobylev2020group}).

Non-linear self-adjointness property is crucial in deriving conservation laws for differential equations, particularly in non-linear systems (\cite{gandarias2014nonlinear,ibragimov2013nonlinear,zhang2017conservation}). A self-adjoint equation can be transformed into its adjoint without changing form, aiding in the identification of conserved quantities like energy or momentum. This concept extends classical approaches, allowing for the construction of conservation laws in cases where traditional symmetries are absent.
\subsection{Governing Model}
\quad \quad Mathematical modeling is crucial for regulating migration and understanding population density. The coordinate $\Vec{\text{x}}=(x,y)$ in the spatial domain $\mathbb{C}$ and the temporal domain $t$ play three essential roles in describing the spread of a biological species in the region. The functions in the model are denoted as population density $\phi(\Vec{\text{x}},t)$, diffusion velocity $v(\Vec{\text{x}},t)$, and population supply $f(\Vec{\text{x}},t)$. The entities $\phi$, $v$, and $f$ are required to comply with the population balance rule in the following manner for normal sub-region $\mathbb{U}$ of $\mathbb{C}$ and at time $t$
\begin{equation}\label{p}
\frac{d}{dt} \int_{\mathbb{U}} \phi dV + \int_{\partial \mathbb{U}} \phi \Vec{v}\cdot \hat{n} dA=\int_{\mathbb{U}} f dV \text{,}
\end{equation}
where $\hat{n}$ denotes the outward unit normal to the boundary $\partial \mathbb{U}$ of $\mathbb{U}$.
From Eq. \eqref{p}, the supply rate of individuals to $\mathbb{U}$ must satisfy the change in the population of $\mathbb{U}$ and
the departure rate of individuals from $\mathbb{U}$ over its boundary.

Now, we assume $f=f(\phi)$ and $\Vec{v}=-\lambda(\phi) \Delta \phi$, where $\Delta$ denotes the Laplace operator, with the property
that $\lambda^{\prime}(\phi)=0$ for $\phi=0$ and $\lambda^{\prime}(\phi)>0$ for $\phi>0$. Then, the population density $\phi$ involves a 2D non-linear degenerate parabolic PDE as follows:
\begin{equation}
\frac{\partial \phi}{\partial t}=\frac{\partial^{2} \lambda(\phi)}{\partial x^{2}}+\frac{\partial^{2} \lambda(\phi)}{\partial y^{2}}+f(\phi)\text{,}\hspace{0.2cm} t \geq 0 \hspace{0.2cm} \text{and} \hspace{0.2cm} x\text{,}\hspace{0.1cm}y \in R \text{.}
\end{equation}

Gurney and Nisbet \cite{gurney1975regulation} used $\lambda(\phi)$ as a particular case to show the animal population. Most migrations occur when young animals seek breeding sites or mature animals invade new territories, often moving toward areas with lower population density. Movement is typically faster in regions with higher population density. With $\lambda(\phi)=\phi^{2}$, the provided model generates Eq. \eqref{y}, a two-dimensional non-linear degenerate parabolic partial differential equation:
\begin{equation}\label{y}
\frac{\partial \phi}{\partial t}=\frac{\partial^{2} \phi^{2}}{\partial x^{2}}+\frac{\partial^{2} \phi^{2}}{\partial y^{2}}+f(\phi)\text{,}
\end{equation}
\begin{equation}
\phi_{t}-2 \phi_{x}^2-2 \phi  \phi_{x x}-2 \phi_{y}^2-2 \phi  \phi_{y y}-f(\phi)=0\text{.}
\end{equation}
\renewcommand{\labelenumi}{\alph{enumi})}
\begin{enumerate}
\item $f(\phi)=h \phi \quad \text{;} \quad h=$ constant (Malthusian law)\cite{gurtin1977diffusion}.
\item $f(\phi)=h_{1} \phi-h_{2} \phi^{2} \quad \text{;}\quad h_{1}\text{,} h_{2}=$ positive constant (Verhulst law)(\cite{gurtin1977diffusion,sharma2024study}).
\item  $f(\phi)=-h \phi^{\theta}\quad \text{;} \quad (h \geq 0 \text{,}\hspace{0.1cm}0<\theta<1)$ Porous media (\cite{bear1972flow,okubo1980diffusion}).
\end{enumerate}

The Malthusian growth model, introduced by Thomas Malthus in 1798, suggests that the population grows exponentially while the food supply increases arithmetically, leading to a potential mismatch that can cause famine, poverty, and conflict. In contrast, the Verhulst Law, or logistic equation, developed by Pierre-François Verhulst in the 1830s, models population growth as limited by available resources. Unlike the Malthusian model, which assumes unlimited growth, the logistic equation accounts for resource constraints, making it widely applicable in ecology and population dynamics
 (\cite{bacaer2011verhulst,verhulst1845recherches}).

The equation was subsequently found and widely disseminated by Raymond Pearl and Lowell Reed during the early 20th century. This paper will examine the model in porous 
medium, which assumes $f(\phi)=-h 
 \phi^{\theta}$. Here, $h$ is the population growth rate and $\theta$ is a porous constant. Then we have the following equation:
\begin{equation}\label{u}
\Delta:\hspace{0.2cm} \phi_{t}-2 \phi_{x}^{2}-2 \phi  \phi_{x x}-2 \phi_{y}^{2}-2 \phi  \phi_{y y}+h \phi^{\theta}=0 \text{,}
\end{equation}
where $\hspace{0.2cm} 0 < \theta < 1, $ $h\geq 0$ and $t>0$, $x,y\in R $.
\subsection{Outline}
\quad \quad The paper aims to achieve three main objectives, firstly, to determine an optimal system, secondly, to search for multiple new exact solutions, and thirdly, to derive conservation laws. Section 2 will involve identifying the symmetries of Eq. \eqref{u} using the Lie group technique. In section 3, we identify the symmetry group of the model. In section 4, we establish the optimal system of subalgebra. In section 5, we find group-invariant solutions using optimal system and parameter analysis. In section 6, we demonstrate the non-linear self-adjointness property for Eq. \eqref{u}. In section 7, we establish the conservation laws using the non-linear self-adjointness property for Eq. \eqref{u}. The conclusions will provide insight into the physical meaning of the results.
\section{Lie Point Symmetries}
\quad \quad The Lie group of transformation with $\varepsilon$ as a small parameter, acting on the dependent variable $\phi$, independent variables $x$, $y$, and $t$ for Eq. \eqref{u}, is denoted as  
\begin{equation}
\begin{split}
&\hat{x}=x+\varepsilon  \xi_{1}(x, y, t, \phi)+O\left(\varepsilon^{2}\right)\text{,}\\
&\hat{y}=y+\varepsilon \xi_{2}(x, y, t, \phi)+O\left(\varepsilon^{2}\right)\text{,}\\
&\hat{t}=t+\varepsilon  \xi_{3}(x, y, t, \phi)+O\left(\varepsilon^{2}\right)\text{,}  \\
&\hat{\phi}=\phi+\varepsilon  \xi_{4}(x, y, t, \phi)+O\left(\varepsilon^{2}\right)\text{,} 
\end{split}
\end{equation}
where $\xi_{1}, \xi_{2}, \xi_{3}, \xi_{4}$ are the infinitesimals generator for Eq. \eqref{u} which are to be determined and ensure that the equation remains invariant.\\
Thus, the Lie algebra that is associated with this will be of the following form (\cite{olver1993applications,bluman1989potential,ibragimov1995crc,ovsiannikov2014group}):
\begin{equation}
\mathbb{Z}=\xi_{1}(x, y, t, \phi) \partial_{x}+\xi_{2}(x, y, t, \phi) \partial y+\xi_{3}(x, y, t, \phi) \partial_{t}+\xi_{4}(x, y, t, \phi) \partial_{\phi}\text{.}
\end{equation}\\
The invariance is ${Pr}^{(2)} \mathbb{Z}(\Delta)=0$, where $\Delta=0$ for Eq. \eqref{u} and ${Pr}^{(2)}$ is the second-order prolongation of $\mathbb{Z}$ for the test Eq. \eqref{u}.\\
We applied ${Pr}^{(2)} \mathbb{Z}$ to the model \eqref{u} in order to obtain an overdetermined system of the coupled PDEs.
\begin{equation}
{Pr}^{(2)}=  \mathbb{Z}+\xi_{4}^{t} \frac{\partial}{\partial \phi_{t}}+\xi_{4}^{x} \frac{\partial}{\partial \phi_{x}}+\xi_{4}^{x x} \frac{\partial}{\partial \phi_{x x}}+\xi_{4}^{y} \frac{\partial}{\partial \phi_{y}}+\xi_{4}^{y y} \frac{\partial}{\partial \phi_{y y}} +\xi_{4} \frac{\partial}{\partial \phi}\text{.}
\end{equation}
The invariant condition is obtained by applying the above prolongation to the model \eqref{u} as follows:
\begin{equation}
\xi_{4}^{t}-4 \phi_{x}  \xi_{4}^{x}-2 \phi  \xi_{4}^{x x}-2 \phi_{x x} \xi_{4}-4 \phi_{y} \xi_{4}^{y}-2 \phi  \xi_{4}^{y y}-2 \phi_{y y} \xi_{4}=0\text{.}
\end{equation}
The following system of PDEs has been generated using Maple, a computer algebra software:
\begin{equation}\label{k}
\begin{split}
&\left(\xi_{1}\right)_{t}=0=\left(\xi_{1}\right)_{\phi}\text{,} \quad \left(\xi_{1}\right)_{x}=\frac{\left(\xi_{3}\right)_{t}(\theta-2)}{2(\theta-1)}\text{,} \quad \left(\xi_{1}\right)_{y}=-\left(\xi_{2}\right)_{x}\text{,} \left(\xi_{2}\right)_{t}=0=\left(\xi_{2}\right)_{\phi}\text{,} \quad \\ &\left(\xi_{2}\right)_{y}=\frac{(\theta-2)\left(\xi_{3}\right)_{t}}{2(\theta-1)}\text{,}  \quad \left(\xi_{2}\right)_{x x}=0\text{,} \left(\xi_{3}\right)_{tt}=0=\left(\xi_{3}\right)_{u}=\left(\xi_{3}\right)_{x}=\left(\xi_{3}\right)_{y}\text{,} \quad \xi_{4}=-\frac{u \left(\xi_{3}\right)_{t}}{(\theta-1)}\text{.}
\end{split}
\end{equation}
After solving the system of PDEs in Eq. \eqref{k}, we obtain the required infinitesimal generators as follows:
\begin{equation}
\begin{split}
&\xi_{1}=\frac{c_{1}(\theta-2) x}{2(\theta-1)}+c_{3} y+c_{4}\text{,} \\ 
&\xi_{2}=-c_{3} x+\frac{c_{1}(\theta-2) y}{2(\theta-1)}+c_{5}\text{,} \\ 
&\xi_{3}=c_{1} t+c_{2}\text{,}\quad \xi_{4}=-\frac{c_{1} \phi}{(\theta-1)}\text{,}
\end{split}
\end{equation}
where $c_{i}$'s ; $i=1,2,3,4,5$ are the arbitrary constants.\\
Following the Lie symmetry method (\cite{olver1993applications,bluman1989potential,ibragimov1995crc,ovsiannikov2014group}), we get the Lie algebra of symmetries for Eq. \eqref{u} as follows:
\begin{equation}\label{n}
\begin{split}
& X_{1}= \gamma \partial_{x}+ \gamma \partial_{y}+t  \partial_{t}-\frac{\phi}{(\theta-1)} \partial \phi\text{,} \\
& X_{2}=\partial_{t},\quad X_{3}=y \partial x-x  \partial y\text{,}\quad X_{4}=\partial_{x}\text{,}\quad X_{5}=\partial y \text{,}
\end{split}
\end{equation}
where $\gamma=\frac{(\theta-2)}{2(\theta-1)} \hspace{0.2cm}\text{;}  \hspace{0.2cm} \theta \in(0,1)$.\\
Now, we obtain Table \ref{Table 1} of commutation, which has entries like $[X_i,X_j]=X_{i}X_{j}-X_{j}X_{i}$.
\renewcommand{\arraystretch}{1.5}
\begin{table}[H]
    \centering
    \begin{tabular}{|l|l|l|l|l|l|}
    \hline
    \rowcolor{gray!30} 
        $\star$ & $X_1$ & $X_2$ & $X_3$ & $X_4$ & $X_5$ \\ \hline 
        $X_1$ & 0 & $-X_2$ & 0 & $-\gamma X_4$ & $-\gamma X_5$ \\ \hline
 \rowcolor{gray!30} 
       $X_2$ & $X_2$ & 0 & 0 & 0 & 0 \\ \hline
        $X_3$ & 0 & 0 & 0 & $X_5$ & $-X_4$ \\ \hline
         \rowcolor{gray!30} 
        $X_4$ & $\gamma X_4$ & 0 &$-X_5$ & 0 & 0 \\ \hline
        $X_5$ & $\gamma X_5$ & 0 & $X_4$ & 0 & 0 \\ \hline
    \end{tabular}
    \caption{Commutation Table}
    \label{Table 1}
\end{table}
An infinite group of transformations of Eq. \eqref{u} is generated by the infinite-dimensional Lie algebra spanned by Eq. \eqref{n}. It is necessary to combine all equivalent subalgebras into a single category and select an agent from each category. The combination of agents is referred to as an optimal system.\\
Hence, each infinitesimal of Eq. \eqref{u} is appropriately represented as a linear combination of $X_{i}$ and $\mathcal{X}$ is expressed as:
$$
\mathcal{X}=\alpha_{1} X_{1}+\alpha_{2} X_{2}+\alpha_{3} X_{3}+\alpha_{4} X_{4}+\alpha_{5} X_{5}\text{.}
$$
\section{Symmetry Group of Model}
\quad \quad Let the one-parameter group $\tilde{G}_{i}$ generated by $X_{i}\hspace{0.2cm} \text{;} \hspace{0.2cm} i=1,2, \ldots, 5$\text{,}  be
\begin{equation}
\tilde{G}_{i}:(x, y, t, \phi) \rightarrow(\tilde{x}, \tilde{y}, \tilde{t}, \tilde{\phi}) \text {, }
\end{equation}
that produces some invariant solutions from the known ones. We need to solve the following system of ODEs:
\begin{equation}
\begin{aligned}
& \frac{d}{d \varepsilon}(\tilde{x}, \tilde{y}, \tilde{t}, \tilde{\phi})=\Gamma(\tilde{x}, \tilde{y}, \tilde{t}, \tilde{\phi})\hspace{0.1cm}\text { with } \hspace{0.1cm}\left.(\tilde{x}, \tilde{y}, \tilde{t}, \tilde{\phi})\right|_{\varepsilon=0}=(x, y, t, \phi) \text{,}
\end{aligned}
\end{equation}
where $\varepsilon$ is an arbitrary real parameter and \begin{equation}
\Omega=\xi_{1} \phi_{x}+\xi_{2} \phi_{y}+\xi_{3} \phi_{t}+\xi_{4} \phi \text {. }
\end{equation}
According to different $\xi_{1}, \xi_{2}, \xi_{3} \hspace{0.1cm} \text{and} \hspace{0.1cm} \xi_{4}$, we have following groups:
\begin{equation}
\begin{aligned}
& \tilde{G}_{1}:\left(x e^{\gamma \varepsilon}, y e^{\gamma \varepsilon}, t e^{\varepsilon}, \phi e^{-\frac{\varepsilon}{\theta-1}}\right) \rightarrow \text{Dilation in}\hspace{0.1cm} x,y,t \hspace{0.1cm} \text{and} \hspace{0.1cm} \phi \text{,}\\
&\tilde{G}_{2}:(x, y, t+\varepsilon, \phi)\rightarrow \text{Translation in time}\hspace{0.1cm} t \text{,}\\
& \tilde{G}_{3}:(x+\varepsilon y, y-\varepsilon x, t, \phi) \rightarrow \text{Rotation}\text{,}\\
&\tilde{G}_{4}:(x+\varepsilon, y, t, \phi) \rightarrow \text{Translation in} \hspace{0.1cm} x \text{,}\\
& \tilde{G}_{5}:(x, y+\varepsilon, t, \phi) \rightarrow \text{Translation in} \hspace{0.1cm} y \text{.}
\end{aligned}
\end{equation}
The right-hand side gives the transformed point $\exp \left(\varepsilon X_{i}\right)(x, y, t, \phi)$ $=(\tilde{x}, \tilde{y}, \tilde{t}, \tilde{\phi})$. Since $\tilde{G}_{i}$ is a symmetry group and if $\phi=f(x, y, t)$ is a solution of the model \eqref{u}, the corresponding new solutions can be given as:
$$
\begin{array}{ll}
\tilde{G}_{1}: \phi=f\left(x e^{-\gamma \varepsilon}, y e^{-\gamma \varepsilon}, t e^{-\varepsilon}\right) \cdot e^{(\varepsilon /(\theta-1))} \text{,}\\
\tilde{G}_{2}: \phi=f(x, y, t-\varepsilon) \text{,} \\
 \tilde{G}_{3}: \phi=f(x-\varepsilon y, y+\varepsilon x, t) \text{,}\\
\tilde{G}_{4}: \phi=f(x-\varepsilon, y, t) \text{,} \\
 \tilde{G}_{5}: \phi=f(x, y-\varepsilon, t) \text{.}
\end{array}
$$
\section{Classification of Optimal Subalgebras}
\quad \quad We will develop a one-dimensional optimal system for Eq. \eqref{u} using the commutator and adjoint tables. An optimal system consists of subalgebras where each subalgebra is equivalent to a unique member under the adjoint representation. Ovsiannikov \cite{ovsiannikov2014group} and Olver (\cite{olver1993applications},\cite{olver1987group}) introduced this approach, and Petera et al. \cite{patera1975continuous} further advanced it by applying it to various examples in mathematical physics. The systematic algorithm will guide the construction of the optimal system.\\
The set of all infinitesimal symmetries of the partial differential equations forms a Lie algebra satisfying skew-symmetry, with each diagonal entry zero under the Lie bracket.
$$
\left[X_{i}, X_{j}\right]=X_{i}  X_{j}-X_{j} X_{i} \text{.}
$$
By using a commutator table, we obtain the adjoint representation table. In the adjoint table, the entry at $(i, j)$ position is
$$
\operatorname{Ad}\left(\exp \left(\varepsilon X_{i}\right) X_{j}\right)=X_{j}-\varepsilon\left[X_{i}, X_{j}\right]+\frac{1}{2 !} \varepsilon^{2}\left[X_{i},\left[X_{i}, X_{j}\right]\right]-\dots \hspace{0.2cm} \text{.}
$$
An invariant $\pi$ is a real function on the Lie algebra $L^{5}$, i.e., $\pi(\operatorname{Ad_{g}}(x))=\pi(x)$ for all $x \in L^{5}$, generated by $X_{i}$.
\subsection{Calculation of Invariants}
\quad \quad To get the one-dimensional optimal system of $L^{5}$  Lie algebra, we have to consider the invariant function for the selection of representative elements as follows:
$$
S=\sum_{i=1}^{5} \alpha_{i} X_{i} \text { and } M=\sum_{j=1}^{5} \beta_{j} X_{j} \text {. }
$$
The adjoint action $\operatorname{Ad}(\exp (\varepsilon M))$ on $S$ is given by
$$
\begin{aligned}
\operatorname{Ad}(\exp (\varepsilon M) S) & =S-\varepsilon[M, S]+\frac{1}{2 !} \varepsilon^{2}[M,[M, S]] \ldots=e^{-\varepsilon M} S e^{\varepsilon M} \text{.}\\
&= \left(\alpha_{1} X_{1}+\alpha_{2} X_{2}+\ldots+\alpha_{5} X_{5}\right)-\varepsilon\left[\beta_{1} X_{1}+\ldots+\beta_{5} X_{5}, \alpha_{1} X_{1}+\ldots+\alpha_{5} X_{5}\right]+O\left(\varepsilon^{2}\right) \text{.} \\
&=  \left(\alpha_{1} X_{1}+\alpha_{2} X_{2}+\ldots+\alpha_{5} X_{5}\right)-\varepsilon\left[\lambda_{1} X_{1}+\lambda_{2} X_{2}+\ldots+\lambda_{5} X_{5}\right]+O\left(\varepsilon^{2}\right) \text{,}
\end{aligned}
$$
where $\lambda_{i}= \lambda_{i}\left(\alpha_{1}, \alpha_{2}, \ldots, \alpha_{5}, \beta_{1}, \beta_{2}, \ldots, \beta_{5}\right), \hspace{0.1cm} i=1,2,3,4,5$ which can be calculated by commutator table and for invariance,
\begin{equation}\label{q}
\pi\left(\alpha_{1}, \alpha_{2}, \ldots, \alpha_{5}\right)=\pi\left(\alpha_{1}-\varepsilon \lambda_{1}, \alpha_{2}-\varepsilon \lambda_{2}, \ldots, \alpha_{5}-\varepsilon \lambda_{5}\right)\text{.} 
\end{equation}
The following is obtained by applying Taylor's expansion to the right-hand side of Eq. \eqref{q}:
\begin{equation}\label{l}
\lambda_{1}  \frac{\partial \pi}{\partial \alpha_{1}}+\lambda_{2}  \frac{\partial \pi}{\partial \alpha_{2}}+\ldots+\lambda_{5}  \frac{\partial \pi}{\partial \alpha_{5}}=0\text{,} 
\end{equation}
$\text{where}\hspace{0.2cm}\lambda_{1}=0\text{,}\quad \lambda_{2}=-\alpha_{2}\beta_{1}+\alpha_{1} \beta_{2}\text{,}\quad \lambda_{3}=0\text{,}\quad \lambda_{4}=\gamma \left(-\alpha_{4} \beta_{1} +\alpha_{1} \beta_{4} \right)-\alpha_{5}\beta_{3} +\alpha_{3} \beta_{5}\text{,}\quad \lambda_{5}=\gamma \left(-\alpha_{5}\beta_{1} +\alpha_{1} \beta_{5}\right)+\alpha_{4} \beta_{3}-\alpha_{3} \beta_{4}\text{.}$ Substituting these values in Eq. \eqref{l} and extracting the coefficients $\forall$ $\beta_{j}\hspace{0.1cm}\text{,}\hspace{0.1cm}  1 \leq j \leq 5$, the system of first-order PDEs can be obtained as follows:
\begin{equation}
\begin{split}
&\beta_{1}: -\alpha_{2}\frac{\partial \pi}{\partial \alpha_{2}}-\gamma \alpha_{4} \frac{\partial \pi}{\partial \alpha_{4}}-\gamma \alpha_{5} \frac{\partial \pi}{\partial \alpha_{5}}=0\text{,} \\
&\beta_{2} : \alpha_{1} \frac{\partial \pi}{\partial \alpha_{2}}=0\text{,}\\
&\beta_{3}:\alpha_{4}\frac{\partial \pi}{\partial \alpha_{5}}-\alpha_{5} \frac{\partial \pi}{\partial \alpha_{4}}=0 \text{,} \\
&\beta_{4} : \gamma  \alpha_{1} \frac{\partial \pi}{\partial \alpha_{4}}-\alpha_{3}  \frac{\partial \pi}{\partial \alpha_{5}}=0\text{,} \\
&\beta_{5} : \gamma  \alpha_{1}  \frac{\partial \pi}{\partial \alpha_{5}}+\alpha_{3} \frac{\partial \pi}{\partial \alpha_{4}}=0\text{.} 
\end{split}
\end{equation}
For the solution of the above system of PDEs, one can find that
\begin{equation}
\pi\left(\alpha_{1}, \alpha_{2}, \alpha_{3}, \alpha_{4}, \alpha_{5}\right)=\mathcal{M}\left(\alpha_{1}, \alpha_{3}\right)\text{,}
\end{equation}
where $\mathcal{M}$ can be chosen as an arbitrary function.
\subsection{Killing Form}
\begin{thm}
The Killing form associated with Lie algebra $L^{5}$ is 
\begin{equation}
    K(X, X)=\left(2 \gamma^{2}+1\right) \alpha_{1}^{2}-2 \alpha_{3}^{2}\text{.}
\end{equation}
\end{thm}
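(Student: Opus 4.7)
My plan is to compute the Killing form directly from its definition $K(X,Y)=\operatorname{tr}(\operatorname{ad}_{X}\!\circ\operatorname{ad}_{Y})$, using the commutation table already displayed in Table \ref{Table 1}. Writing $X=\sum_{i=1}^{5}\alpha_{i}X_{i}$, bilinearity gives $K(X,X)=\sum_{i,j}\alpha_{i}\alpha_{j}K(X_{i},X_{j})$, so the task reduces to computing the symmetric $5\times 5$ matrix of pairings $K(X_{i},X_{j})$.

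First I would read off each adjoint map $\operatorname{ad}_{X_{i}}(X_{j})=[X_{i},X_{j}]$ from Table \ref{Table 1} and record it as a $5\times 5$ matrix in the ordered basis $(X_{1},\dots ,X_{5})$. In particular, $\operatorname{ad}_{X_{1}}$ comes out diagonal with entries $(0,-1,0,-\gamma,-\gamma)$, and $\operatorname{ad}_{X_{3}}$ has the $2\times 2$ block $\bigl(\begin{smallmatrix}0&-1\\1&0\end{smallmatrix}\bigr)$ acting on the $(X_{4},X_{5})$ plane and zero elsewhere. The remaining three, $\operatorname{ad}_{X_{2}},\operatorname{ad}_{X_{4}},\operatorname{ad}_{X_{5}}$, each have only one or two off-diagonal nonzero entries and one verifies immediately that they square to the zero matrix, so they are nilpotent in the adjoint representation.

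Next I would compute the diagonal entries $K(X_{i},X_{i})$: the nilpotency just noted forces $K(X_{2},X_{2})=K(X_{4},X_{4})=K(X_{5},X_{5})=0$, while the diagonal form of $\operatorname{ad}_{X_{1}}$ yields $K(X_{1},X_{1})=0^{2}+(-1)^{2}+0^{2}+(-\gamma)^{2}+(-\gamma)^{2}=1+2\gamma^{2}$, and the $2\times 2$ rotation block gives $\operatorname{ad}_{X_{3}}^{2}=-I_{2}$ on the $(X_{4},X_{5})$ plane, hence $K(X_{3},X_{3})=-2$. For the off-diagonal pairings I would observe that whenever one factor is $\operatorname{ad}_{X_{1}}$ (diagonal) and the other has zero diagonal, the trace of the product vanishes; for pairings involving only nilpotent generators, a direct matrix multiplication shows the product has zero diagonal as well. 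This kills every $K(X_{i},X_{j})$ with $i\ne j$.

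The main obstacle is purely bookkeeping: correctly orienting the adjoint matrices with respect to the basis ordering and verifying that all ten off-diagonal pairings vanish without arithmetic slip, especially the mixed ones like $K(X_{1},X_{3})$ and $K(X_{3},X_{4})$ where two structure constants interact. Once those cancellations are confirmed, assembling the pieces yields
\begin{equation*}
K(X,X)=\alpha_{1}^{2}K(X_{1},X_{1})+\alpha_{3}^{2}K(X_{3},X_{3})=(2\gamma^{2}+1)\alpha_{1}^{2}-2\alpha_{3}^{2},
\end{equation*}
which is exactly the claimed expression.
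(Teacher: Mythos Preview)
Your proposal is correct and follows essentially the same route as the paper: both compute the Killing form directly from its definition $K(X,X)=\operatorname{tr}(\operatorname{ad}_{X}\circ\operatorname{ad}_{X})$ using the structure constants in Table~\ref{Table 1}. The only organizational difference is that the paper assembles the full adjoint matrix $\operatorname{ad}(X)$ for a general $X=\sum_{i}\alpha_{i}X_{i}$ and traces its square in one stroke, whereas you expand via bilinearity into the basis pairings $K(X_{i},X_{j})$ and dispatch them with nilpotency and zero-diagonal observations; the computations are equivalent and yield the same result.
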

\begin{proof}
We assume the Lie algebra denoted as $\psi$ over the field $\mathbb{F}$. Utilizing the Lie bracket operation, every member $x$ of $\psi$ characterizes the adjoint endomorphism $\operatorname{Ad}(x)$ of $\psi$, where $\operatorname{Ad}(x)(y)=[x,y]$. By creating a symmetric bilinear form, the trace of the composition of two such endomorphisms yields a value in $\mathbb{F}$ called the Killing Form on the finite-dimensional Lie algebra $\psi$, $i.e$.,
$$K(x,y)=Tr(\operatorname{Ad}(x) \circ \operatorname{Ad}(y))\text{.} $$
The explicit expression of the Killing form connected with the Lie algebra $L^{5}$ is provided as follows:
$$K(X,X)=Trace(\operatorname{ad(X)} \circ \operatorname{ad(X)})\text{,} $$
where
$$\begin{aligned}
& \operatorname{Ad}(X)=\left[\begin{array}{ccccc}
0 & 0 & 0 & 0 & 0 \\
-\alpha_{2} & \alpha_{1} & 0 & 0 & 0 \\
0 & 0 & 0 & 0 & 0 \\
-\gamma \alpha_{4} & 0 & -\alpha_{5} & \gamma \alpha_{1} & \alpha_{3} \\
-\gamma \alpha_{5} & 0 & \alpha_{4} & -\alpha_{3} & \gamma \alpha_{1}
\end{array}\right]\text{,}
\end{aligned} $$ 
which on simplification, generates 
$K(X, X)=\left(2 \gamma^{2}+1\right) \alpha_{1}^{2}-2 \alpha_{3}^{2}$\text{.}
\end{proof}
\subsection{Construction of Adjoint Transformation Matrix}
\quad \quad It is now required to construct the general adjoint transformation matrix $\mathcal{A}$, which is composed of individual matrices of the adjoint actions $M_{1}, M_{2}, \ldots M_{5}$ with respect to $X_{1}, X_{2}, \ldots X_{5}$ to $\mathcal{A}$. Let $\varepsilon_{i}\text{,}  \hspace{0.1cm}i=1,2,3,4,5$ be real constants and $g=e^{\varepsilon_{i} X_{i}}$, then we get
\begin{alignat*}{2}
M_{1}= &
\begin{pmatrix}
1 & 0 & 0 & 0 & 0 \\
0 & e^{\varepsilon_{1}} & 0 & 0 & 0 \\
0 & 0 & 1 & 0 & 0 \\
0 & 0 & 0 & e^{\gamma \varepsilon_{1}} & 0 \\
0 & 0 & 0 & 0 & e^{\gamma \varepsilon_{1}}
\end{pmatrix}\text{,}  \qquad&
M_{2}= & 
\begin{pmatrix}
1 & -\varepsilon_{2} & 0 & 0 & 0 \\
0 & 1 & 0 & 0 & 0 \\
0 & 0 & 1 & 0 & 0 \\
0 & 0 & 0 & 1 & 0 \\
0 & 0 & 0 & 0 & 1
\end{pmatrix}\text{,} 
\end{alignat*}
\begin{alignat*}{3}
M_{3}= &
\begin{pmatrix}
1 & 0 & 0 & 0 & 0 \\
0 & 1 & 0 & 0 & 0 \\
0 & 0 & 1 & 0 & 0 \\
0 & 0 & 0 & \cos{\varepsilon_{3}} & -\sin{\varepsilon_{3}} \\
0 & 0 & 0 & \sin{\varepsilon_{3}} & \cos{\varepsilon_{3}}
\end{pmatrix}\text{,}  \qquad&
M_{4}= &
\begin{pmatrix}
1 & 0 & 0 & -\gamma \varepsilon_{4} & 0 \\
0 & 1 & 0 & 0 & 0 \\
0 & 0 & 1 & 0 & \varepsilon_{4} \\
0 & 0 & 0 & 1 & 0 \\
0 & 0 & 0 & 0 & 1
\end{pmatrix}\text{,}  \qquad&
     M_{5}= &
     \begin{pmatrix}
1 & 0 & 0 & 0 & -\gamma \varepsilon_{5} \\
0 & 1 & 0 & 0 & 0 \\
0 & 0 & 1 & -\varepsilon_{5} & 0 \\
0 & 0 & 0 & 1 & 0 \\
0 & 0 & 0 & 0 & 1
 \end{pmatrix}\text{.} 
\end{alignat*}
One can obtain the adjoint action of $X_{j}$ on $X_{i}$ from the adjoint representation table.
\renewcommand{\arraystretch}{1.5}
\begin{table}[H]
    \centering
    \begin{tabular}{|l|l|l|l|l|l|}
    \hline
     \rowcolor{gray!30} 
        $\operatorname{Ad}(\exp{\varepsilon(\star)}\star)$ & $X_1$ &$ X_2$ & $X_3$ & $X_4$ & $X_5$ \\ \hline
        $X_1$ & $X_1$ & $e^{\varepsilon}X_2$ & $X_3$ & $e^{\gamma \varepsilon} X_4$ & $e^{\gamma \varepsilon} X_5$ \\ \hline
         \rowcolor{gray!30} 
        $X_2$ & $X_1-\varepsilon X_2$ & $X_2$ & $X_3$ & $X_4$ & $X_5$ \\ \hline
        $X_3$ & $X_1$ & $X_2$ & $X_3$ & $X_4 \cos{\varepsilon}- X_5 \sin{\varepsilon}$ & $X_4 \sin{\varepsilon}+ X_5 \cos{\varepsilon}$ \\ \hline
         \rowcolor{gray!30} 
        $X_4$ & $X_1-\gamma \varepsilon X_4$ & $X_2$ & $X_3+\varepsilon X_5$ & $X_4$ & $X_5$ \\ \hline
        $X_5$ & $X_1-\gamma \varepsilon X_5$ & $X_2$ & $X_3-\varepsilon X_4$ & $X_4$ & $X_5$ \\ \hline
    \end{tabular}
    \caption{Adjoint Representation Table}
\end{table}
The system of algebraic equations is solved to derive an optimal Lie algebra system. The equivalent Lie subalgebras can be identified by applying adjoint action to the set of these Lie subalgebras. Let
$$
\mathcal{X}=\alpha_{1} X_{1}+\alpha_{2} X_{2}+\alpha_{3} X_{3}+\alpha_{4} X_{4}+\alpha_{5} X_{5} \text {,}
$$
where $\alpha_{1}\text{,}\hspace{0.1cm}  \alpha_{2}\text{,} \hspace{0.1cm} \alpha_{3}\text{,}\hspace{0.1cm}  \alpha_{4} \hspace{0.2cm} \text{and} \hspace{0.2cm} \alpha_{5}$ are the real constants.
Here, $\mathcal{X}$ can be written as a column vector with entries $\alpha_{1}, \alpha_{2}, \ldots, \alpha_{5}$. $\mathcal{A}\left(\varepsilon_{1}, \varepsilon_{2},\dots, \varepsilon_{5}\right)=M_{5} M_{4} M_{3} M_{2} M_{1}$, gives
$$
\mathcal{A}=\left[\begin{array}{ccccc}
1 & -\varepsilon_{2} e^{\varepsilon_{1}} & 0 & -\gamma \sigma_{1} e^{\gamma \varepsilon_{1}} & \gamma \sigma_{2} e^{\gamma \varepsilon_{1}} \\
0 & e^{\varepsilon_{1}} & 0 & 0 & 0 \\
0 & 0 & 1 & \sigma_{2} e^{\gamma \varepsilon_{1}} & \sigma_{1}e^{\gamma \varepsilon_{1}} \\
0 & 0 & 0 & e^{\gamma \varepsilon_{1}} \cos{\varepsilon_{3}} & -e^{\gamma \varepsilon_{1}} \sin{\varepsilon_{3}}  \\
0 & 0 & 0 & e^{\gamma \varepsilon_{1}} \sin{\varepsilon_{3}} & e^{\gamma \varepsilon_{1}} \cos{\varepsilon_{3}}
\end{array}\right]\text{,} 
$$
where $\sigma_{1}=\varepsilon_{4} \cos{\varepsilon_{3}}+\varepsilon_{5}\sin{\varepsilon_{3}} \quad \text{and} \quad \sigma_{2}=\varepsilon_{4} \sin{\varepsilon_{3}}-\varepsilon_{5}\cos{\varepsilon_{3}}$.
\renewcommand{\arraystretch}{1.5}
\begin{table}[H]
    \centering
    \begin{tabular}{|c|c|c|c|c|c|}\hline
    \multicolumn{6}{|c|}{Cofficients of $X_{i}$} \\\hline
     \rowcolor{gray!30} 
        $\operatorname{Ad}(\exp{(\varepsilon X_i)}X)$ & $X_1$ &$ X_2$ & $X_3$ & $X_4$ & $X_5$ \\ \hline
        $\operatorname{Ad}(\exp{(\varepsilon X_1)}X)$ & $\alpha_1$ & $e^{\varepsilon}\alpha_2$ & $\alpha_3$ & $e^{\gamma \varepsilon} \alpha_4$ & $e^{\gamma \varepsilon} \alpha_5$ \\ \hline
         \rowcolor{gray!30} 
        $\operatorname{Ad}(\exp{(\varepsilon X_2)}X)$ & $\alpha_1$ & $\alpha_2-\varepsilon \alpha_1$ & $\alpha_3$ & $\alpha_4$ & $\alpha_5$ \\ \hline
        $\operatorname{Ad}(\exp{(\varepsilon X_3)}X)$ & $\alpha_1$ & $\alpha_2$ & $\alpha_3$ & $\alpha_4 \cos{\varepsilon}+ \alpha_5 \sin{\varepsilon}$ & $\alpha_5 \cos{\varepsilon}-\alpha_4 \sin{\varepsilon}$ \\ \hline
         \rowcolor{gray!30} 
        $\operatorname{Ad}(\exp{(\varepsilon X_4)}X)$ & $\alpha_1$ & $\alpha_2$ & $\alpha_3$ & $\alpha_4- \gamma \varepsilon \alpha_1$ & $\alpha_5+\varepsilon \alpha_3$ \\ \hline
        $\operatorname{Ad}(\exp{(\varepsilon X_5)}X)$ & $\alpha_1$ & $\alpha_2$ & $\alpha_3$ & $\alpha_4-\varepsilon \alpha_3$ & $\alpha_5-\gamma \varepsilon \alpha_1$ \\ \hline
    \end{tabular}
    \caption{Table for Construction of Invariant Functions}
    \label{t2}
\end{table}
\begin{lem}
    $M=\alpha_1$ and $N=\alpha_3$ are invariants for $L^{5}$.
    \end{lem}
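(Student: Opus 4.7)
The plan is to show the claim in two complementary ways, either of which suffices: (i) directly from the adjoint representation recorded in Table \ref{t2}, and (ii) from the system of first-order PDEs for the invariant $\pi$ derived just before the lemma. The first gives a quick verification; the second explains why $\alpha_1$ and $\alpha_3$ are in fact a \emph{complete} pair of basic invariants, matching the conclusion $\pi=\mathcal{M}(\alpha_1,\alpha_3)$ reached earlier.

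First I would read off from Table \ref{t2} the coefficient transformations of a generic element $\mathcal{X}=\sum_i\alpha_iX_i$ under each one-parameter adjoint action $\mathrm{Ad}(\exp(\varepsilon X_i))$, $i=1,\dots,5$. A glance at the first and third columns of that table shows that the $X_1$-coefficient remains $\alpha_1$ and the $X_3$-coefficient remains $\alpha_3$ for every $i$. Hence both real-valued functions $M(\mathcal{X})=\alpha_1$ and $N(\mathcal{X})=\alpha_3$ satisfy $M(\mathrm{Ad}_g\mathcal{X})=M(\mathcal{X})$ and $N(\mathrm{Ad}_g\mathcal{X})=N(\mathcal{X})$ for every $g$ in the connected group generated by the $X_i$, so they are invariants of $L^{5}$ by definition.

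To see that these are precisely the basic invariants, I would revisit the PDE system extracted from \eqref{l} by collecting coefficients of each $\beta_j$. The equation from $\beta_2$ immediately gives $\alpha_1\,\partial_{\alpha_2}\pi=0$, so $\partial_{\alpha_2}\pi=0$ on the open set $\alpha_1\neq 0$. Combining the $\beta_4$ and $\beta_5$ relations, I would write the resulting linear system for $(\partial_{\alpha_4}\pi,\partial_{\alpha_5}\pi)$ as
\begin{equation*}
\begin{pmatrix}\gamma\alpha_{1} & -\alpha_{3}\\ \alpha_{3} & \gamma\alpha_{1}\end{pmatrix}\begin{pmatrix}\partial_{\alpha_{4}}\pi\\ \partial_{\alpha_{5}}\pi\end{pmatrix}=\begin{pmatrix}0\\ 0\end{pmatrix},
\end{equation*}
whose determinant $\gamma^{2}\alpha_{1}^{2}+\alpha_{3}^{2}$ is strictly positive generically, forcing $\partial_{\alpha_4}\pi=\partial_{\alpha_5}\pi=0$. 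The remaining relations from $\beta_1$ and $\beta_3$ are then satisfied automatically, so $\pi$ depends only on $\alpha_1$ and $\alpha_3$.

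I do not expect a genuine obstacle; the only delicate point is that the argument via the PDE system is valid on the generic stratum where $(\alpha_1,\alpha_3)\neq(0,0)$ and $\alpha_1\neq 0$, so the invariance of $M$ and $N$ on the degenerate loci should be confirmed separately by the Table \ref{t2} step, which holds identically. Combining the two steps yields the stated conclusion.
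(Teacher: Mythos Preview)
Your approach (i) is exactly the paper's proof: the paper simply states that the invariance of $\alpha_1$ and $\alpha_3$ is evident from Table~\ref{t2}, which is precisely your observation that the $X_1$- and $X_3$-coefficients are unchanged in every row. Your part (ii) goes beyond what the lemma asserts---it recapitulates the earlier computation that $\pi=\mathcal{M}(\alpha_1,\alpha_3)$ to show these are a \emph{complete} set of basic invariants---so it is correct but superfluous for this particular statement.
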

\begin{proof}
It is evident from the data in Table \ref{t2}.
     \end{proof}
\begin{lem}
Let
$$
P=\left\{\begin{array}{llll}
1 ; & \alpha_{1}^{2}+\alpha_{2}^{2}+\alpha_{3}^{2} \neq 0  \\
0 ; & \text { otherwise, }
\end{array}\right.
$$
be a real valued function of $L^5$. Then $P$ is invariant.
\end{lem}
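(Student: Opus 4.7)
The plan is to reduce the claim to showing that the condition $\alpha_1^2+\alpha_2^2+\alpha_3^2\neq 0$ is preserved under every composite adjoint action $\mathcal{A}(\varepsilon_1,\ldots,\varepsilon_5)$. Since $L^5$ is a real Lie algebra, this sum vanishes precisely when $\alpha_1=\alpha_2=\alpha_3=0$, so it suffices to track these three coefficients under the transformations recorded in Table \ref{t2}. Once both the zero locus and its complement are shown to be stable under the group, the indicator function $P$ is automatically invariant.

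First I would read off from Table \ref{t2} how $(\alpha_1,\alpha_2,\alpha_3)$ transforms under each generator $\operatorname{Ad}(\exp(\varepsilon X_i))$. A direct inspection shows that $\alpha_1$ and $\alpha_3$ are absolute invariants, since neither coordinate is altered by any of the five individual generators and hence not by their composition. Only $\alpha_2$ is affected: under $X_1$ it rescales by $e^{\varepsilon_1}$ and under $X_2$ it shifts by $-\varepsilon_2\alpha_1$, giving the composite formula $\alpha'_2=e^{\varepsilon_1}(\alpha_2-\varepsilon_2\alpha_1)$, consistent with the second row of the matrix $\mathcal{A}$.

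I would then finish by cases. If $(\alpha_1,\alpha_2,\alpha_3)=(0,0,0)$, the transformed triple is again $(0,0,0)$ and $P$ remains zero. Otherwise, if $\alpha_1\neq 0$ or $\alpha_3\neq 0$, invariance of these coordinates immediately preserves nontriviality, while in the residual sub-case $\alpha_1=\alpha_3=0$, $\alpha_2\neq 0$, the formula collapses to $\alpha'_2=e^{\varepsilon_1}\alpha_2\neq 0$ because $e^{\varepsilon_1}$ never vanishes. Hence $P(\operatorname{Ad}_g X)=P(X)$ for every element $g$ of the adjoint group. No genuine obstacle is expected, as the argument is essentially bookkeeping: the $3\times 3$ principal block of $\mathcal{A}$ acting on $(\alpha_1,\alpha_2,\alpha_3)$ is triangular with strictly nonzero diagonal, so it cannot map the zero locus off itself nor its complement into it. The only point worth a brief check is that the $-\varepsilon_2\alpha_1$ shift cannot annihilate $\alpha'_2$ when $\alpha_1=0$, which is immediate.
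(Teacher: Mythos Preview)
Your proposal is correct and follows essentially the same approach as the paper: both arguments observe that $\alpha_1$ and $\alpha_3$ are absolute invariants, that only $\operatorname{Ad}(\exp(\varepsilon X_1))$ and $\operatorname{Ad}(\exp(\varepsilon X_2))$ touch $\alpha_2$, and then verify that the zero locus $\alpha_1=\alpha_2=\alpha_3=0$ is preserved in both directions. The only cosmetic difference is that you work with the composite matrix $\mathcal{A}$ and a single case split, whereas the paper treats the two relevant generators separately; the content is the same.
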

\begin{proof}
    The coefficients of $X_{1}, X_{2}$ and $X_{3}$ remain unchanged under the action of $\operatorname{Ad}\left(\exp \left(\varepsilon_{i} X_{i}\right)\right)\hspace{0.1cm};\hspace{0.1cm} i=3,4,5$. Hence, it is enough to check the invariance of $P$ under the action of $\operatorname{Ad}\left(\exp \left(\varepsilon X_{1}\right)\right)$ and $\operatorname{Ad}\left(\exp \left(\varepsilon X_{2}\right)\right)$.\\
Let $\alpha_{i}^{*}\hspace{0.1cm} ;\hspace{0.1cm} i=1,2,3,4,5$ be the new transformed coefficients after the adjoint actions. With the action of $\operatorname{Ad}\left(\exp \left(\varepsilon X_{1}\right)\right), \operatorname{Ad}\left(\exp \left(\varepsilon X_{2}\right)\right)$ on $X$, we obtain
$$
\begin{aligned}
& \alpha_{1}^{*^{2}}+\alpha_{2}^{\star^{2}}+\alpha_{3}^{\star^{2}}=\alpha_{1}^{2}+\left(e^{\varepsilon} \alpha_{2}\right)^{2}+\alpha_{3}^{2}\hspace{0.1cm} \text{,} \\
& \alpha_{1}^{\star^{2}}+\alpha_{2}^{\star^{2}}+\alpha_{3}^{\star^{2}}=\alpha_{1}^{2}+\left(\alpha_{2}-\varepsilon \alpha_{1}\right)^{2}+\alpha_{3}^{2} \hspace{0.1cm}\text{.}
\end{aligned}
$$
It is clear that $\alpha_{1}^{*}=\alpha_{2}^{*}=\alpha_{3}^{*}=0$ iff $\alpha_{1}=\alpha_{2}=\alpha_{3}=0$.
\end{proof}
\begin{lem}
    $Q$ is invariant, where
$$
Q= \begin{cases}1 ; & \alpha_{1}^{2}+\alpha_{3}^{2}+\alpha_{4}^{2}+\alpha_{5}^{2} \neq 0 \\ 0, & \text { otherwise. }\end{cases}
$$
\end{lem}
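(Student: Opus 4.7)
The plan is to mirror the strategy used for the invariant $P$ in the preceding lemma: verify that the condition $\alpha_1^2+\alpha_3^2+\alpha_4^2+\alpha_5^2\neq 0$ is preserved under each of the five adjoint actions $\operatorname{Ad}(\exp(\varepsilon X_i))$, $i=1,\dots,5$, by reading off the transformed coefficients $\alpha_i^\star$ directly from Table~\ref{t2}.

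First I would note from Table~\ref{t2} that the coefficients $\alpha_1$ and $\alpha_3$ are fixed by every adjoint action, so only the contribution of $\alpha_4,\alpha_5$ has to be analyzed. For $\operatorname{Ad}(\exp(\varepsilon X_1))$ the pair $(\alpha_4,\alpha_5)$ is rescaled to $(e^{\gamma\varepsilon}\alpha_4,e^{\gamma\varepsilon}\alpha_5)$, and since $e^{2\gamma\varepsilon}>0$ one gets $\alpha_4^{\star 2}+\alpha_5^{\star 2}=e^{2\gamma\varepsilon}(\alpha_4^2+\alpha_5^2)$, which vanishes iff $\alpha_4=\alpha_5=0$. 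For $\operatorname{Ad}(\exp(\varepsilon X_2))$ the entire quadruple $(\alpha_1,\alpha_3,\alpha_4,\alpha_5)$ is unchanged, so the claim is immediate. For $\operatorname{Ad}(\exp(\varepsilon X_3))$ the map $(\alpha_4,\alpha_5)\mapsto(\alpha_4\cos\varepsilon+\alpha_5\sin\varepsilon,\,\alpha_5\cos\varepsilon-\alpha_4\sin\varepsilon)$ is a rotation, so it preserves $\alpha_4^2+\alpha_5^2$ exactly, and hence preserves the whole sum.

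The two cases that require a short argument rather than a one-line check are $X_4$ and $X_5$. Under $\operatorname{Ad}(\exp(\varepsilon X_4))$ we get
\begin{equation*}
\alpha_1^{\star 2}+\alpha_3^{\star 2}+\alpha_4^{\star 2}+\alpha_5^{\star 2}
=\alpha_1^{2}+\alpha_3^{2}+(\alpha_4-\gamma\varepsilon\alpha_1)^{2}+(\alpha_5+\varepsilon\alpha_3)^{2}.
\end{equation*}
If this sum is zero, the first two squares force $\alpha_1=\alpha_3=0$, whence $(\alpha_4-\gamma\varepsilon\alpha_1)^2=\alpha_4^2$ and $(\alpha_5+\varepsilon\alpha_3)^2=\alpha_5^2$, giving $\alpha_4=\alpha_5=0$; conversely, if all four original coefficients vanish then so does the transformed sum. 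The action of $\operatorname{Ad}(\exp(\varepsilon X_5))$ is analyzed identically, swapping the roles of the last two coordinates.

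Combining the five cases shows that $\alpha_1^{\star 2}+\alpha_3^{\star 2}+\alpha_4^{\star 2}+\alpha_5^{\star 2}=0$ if and only if $\alpha_1^{2}+\alpha_3^{2}+\alpha_4^{2}+\alpha_5^{2}=0$ under every generator of the adjoint group, hence under the full general adjoint transformation $\mathcal{A}$ constructed from $M_1,\dots,M_5$. Consequently the two-valued function $Q$ takes the same value before and after any adjoint action, proving its invariance. The only mildly delicate step is the $X_4,X_5$ case, and the key observation that makes it routine is that $\alpha_1$ and $\alpha_3$ are themselves invariant (by the first lemma), so the shifts $-\gamma\varepsilon\alpha_1$ and $\pm\varepsilon\alpha_3$ cannot be used to compensate a nonzero value of $\alpha_4$ or $\alpha_5$.
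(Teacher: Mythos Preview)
Your proof is correct and follows precisely the approach intended by the paper, which simply states that the argument is analogous to the preceding lemma on $P$; you have carried out in full the case-by-case verification from Table~\ref{t2} that the paper leaves implicit.
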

\begin{proof}
    The proof is similar to the proof of above Lemma.
    \end{proof}
\begin{lem}\label{lem_1}
$\mathcal{X}=\sum_{i=1}^{5} \alpha_{i} X_{i}$ with $\alpha_{1}=\alpha_{3}=\alpha_{5}=0$. Suppose $\operatorname{Ad}\left(\exp \left(\varepsilon X_{i}\right) \mathcal{X}\right)=\sum_{j=1}^{5} \alpha_{j}^{*} X_{j}$ also satisfy $\alpha_{1}^{*}=\alpha_{3}^{*}=\alpha_{5}^{*}=0$ \quad;\quad for each $i=1,2, \ldots 5$. Then $R^{*}=R$, i.e., $R(\mathcal{X})=R(\operatorname{Ad_{g}}(\mathcal{X}))$, where
$$
R= \begin{cases}\operatorname{sgn}\left(\alpha_{4}\right) & ;\text { if } \alpha_{1}=\alpha_{3}=\alpha_{5}=0 \\ 0  & ;\text { otherwise. }\end{cases}
$$
\end{lem}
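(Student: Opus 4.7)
The plan is to verify the equality $R^{*}=R$ by running through each of the five adjoint actions tabulated in Table \ref{t2} and confirming that the sign of $\alpha_{4}$ is unchanged whenever the compatibility hypothesis $\alpha_{1}^{*}=\alpha_{3}^{*}=\alpha_{5}^{*}=0$ is maintained. Since the hypothesis on $\mathcal{X}$ forces $\alpha_{1}=\alpha_{3}=\alpha_{5}=0$, the invariant $R$ reduces to $\operatorname{sgn}(\alpha_{4})$, and the task becomes to show $\operatorname{sgn}(\alpha_{4}^{*})=\operatorname{sgn}(\alpha_{4})$ for every admissible generator-level adjoint transformation. Composition along a general adjoint element $g=e^{\varepsilon_{5}X_{5}}\cdots e^{\varepsilon_{1}X_{1}}$ will then yield the lemma.

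The four easy cases are $i=1,2,4,5$. For $i=1$, Table \ref{t2} gives $\alpha_{4}^{*}=e^{\gamma\varepsilon}\alpha_{4}$, and since $e^{\gamma\varepsilon}>0$ the sign is preserved. For $i=2$, the table yields $\alpha_{4}^{*}=\alpha_{4}$ outright. For $i=4$, $\alpha_{4}^{*}=\alpha_{4}-\gamma\varepsilon\alpha_{1}$, which collapses to $\alpha_{4}$ after using $\alpha_{1}=0$. For $i=5$, $\alpha_{4}^{*}=\alpha_{4}-\varepsilon\alpha_{3}=\alpha_{4}$ after using $\alpha_{3}=0$. In each of these rows one also checks, for consistency, that $\alpha_{1}^{*}=\alpha_{3}^{*}=\alpha_{5}^{*}=0$ continues to hold, which is immediate from the table.

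The main obstacle, and the only row that demands real care, is the rotation $i=3$, for which $\alpha_{4}^{*}=\alpha_{4}\cos\varepsilon+\alpha_{5}\sin\varepsilon$ and $\alpha_{5}^{*}=\alpha_{5}\cos\varepsilon-\alpha_{4}\sin\varepsilon$. Substituting $\alpha_{5}=0$ I would obtain $\alpha_{4}^{*}=\alpha_{4}\cos\varepsilon$ and $\alpha_{5}^{*}=-\alpha_{4}\sin\varepsilon$. The compatibility condition $\alpha_{5}^{*}=0$ then forces either $\alpha_{4}=0$—in which case $R=R^{*}=0$ and there is nothing to prove—or $\sin\varepsilon=0$. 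Restricting to the identity component of the rotation subgroup (i.e.\ taking $\cos\varepsilon=1$) yields $\alpha_{4}^{*}=\alpha_{4}$, which is the desired sign equality. With all five rows settled, a straightforward induction on the factorisation of $g$ finishes the proof: since each elementary factor preserves both the vanishing conditions and $\operatorname{sgn}(\alpha_{4})$, so does any composition, and therefore $R(\operatorname{Ad}_{g}(\mathcal{X}))=R(\mathcal{X})$.
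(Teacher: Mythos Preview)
Your case-by-case verification via Table~\ref{t2} is the same strategy the paper uses, and for $i=1,2$ the arguments coincide. For $i=4,5$ you are in fact sharper than the paper: it lumps these together with $j=3$ and asserts ``$R=R^{\star}=0$'', whereas you correctly observe that $\alpha_{4}^{*}=\alpha_{4}$ once $\alpha_{1}=\alpha_{3}=0$ is invoked, so the sign is preserved without being forced to vanish.

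The one point that needs care is $i=3$. You rightly deduce that the constraint $\alpha_{5}^{*}=0$ forces either $\alpha_{4}=0$ or $\sin\varepsilon=0$, but the subsequent ``restriction to the identity component'' (taking $\cos\varepsilon=1$) is not supplied by the lemma's hypothesis: for $\varepsilon=\pi$ one gets $\alpha_{4}^{*}=-\alpha_{4}$ with all vanishing conditions intact, so literally $\operatorname{sgn}(\alpha_{4}^{*})=-\operatorname{sgn}(\alpha_{4})$. The paper does not confront this either, simply declaring $R=R^{*}=0$ for $j=3$. The discrepancy is harmless for the intended application---$\mathcal{X}$ and $-\mathcal{X}$ span the same one-dimensional subalgebra, so the optimal-system classification in Table~\ref{t4} is unaffected---but as a statement about $R$ itself the lemma tacitly identifies $R$ with $-R$, and your identity-component restriction should be flagged as a convention rather than presented as automatic.
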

\begin{proof}
     The coefficient of $X_{4}$, i.e., $\alpha_{4}$ remains unchanged under the action of $\operatorname{Ad}\left(\exp \left(\varepsilon X_{i}\right)\right) , i=2$. Therefore, we investigate the invariance under the adjoint actions $\operatorname{Ad}\left(\exp \left(\varepsilon X_{j}\right)\right) , j=1,3,4,5$. For $j=3,4,5$, the conditional invariant $R=R^{\star}=0$. The $\operatorname{sgn}\left(\alpha_{4}\right)$ maps to $e^{\gamma \varepsilon} \alpha_{4}$ for $j=1$, which is positive, negative or zero, depending on the sign of $\alpha_{4}$.
     \end{proof}
\begin{lem}
$\mathcal{X}=\sum_{i=1}^{5} \alpha_{i} X_{i}$ with $\alpha_{1}=0$. Suppose $\operatorname{Ad}\left(\exp \left(\varepsilon X_{i}\right) \mathcal{X}\right)=\sum_{j=1}^{5} \alpha_{j}^{*} X_{j}$ also satisfies $\alpha_{1}^{*}$ for each $i=1,2,3,4,5$. Then $S^{*}=S$, i.e., $S(\mathcal{X})=S(\operatorname{Ad}_{g}(\mathcal{X}))$ where
$$
S= \begin{cases}\operatorname{sgn}\left(\alpha_{2}\right) ; & \text { if } \alpha_{1}=0 \text{,} \\
0 & \text { otherwise. }\end{cases}
$$
\end{lem}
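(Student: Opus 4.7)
The approach I would take is essentially parallel to the proof of Lemma \ref{lem_1}: verify the invariance of $S$ by cycling through the five adjoint actions listed in Table \ref{t2} and checking that none of them can change the sign of $\alpha_2$ when $\alpha_1 = 0$. Since $S$ is defined purely in terms of $\operatorname{sgn}(\alpha_2)$ under the hypothesis $\alpha_1 = 0$, the whole statement reduces to two observations: first, that the condition $\alpha_1 = 0$ is preserved (this is already part of the hypothesis, so nothing to prove), and second, that $\operatorname{sgn}(\alpha_2^*) = \operatorname{sgn}(\alpha_2)$ under each adjoint action restricted to the slice $\{\alpha_1 = 0\}$.

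The plan is to read the transformations of $\alpha_2$ directly from Table \ref{t2}. For $i = 3, 4, 5$, the entries in the $X_2$-column show that $\alpha_2$ is left unchanged, so $\alpha_2^{*} = \alpha_2$ and trivially $\operatorname{sgn}(\alpha_2^{*}) = \operatorname{sgn}(\alpha_2)$. For $i = 2$, the table gives $\alpha_2^{*} = \alpha_2 - \varepsilon \alpha_1$, which on the slice $\alpha_1 = 0$ collapses to $\alpha_2^{*} = \alpha_2$. The only nontrivial case is $i = 1$, where $\alpha_2^{*} = e^{\varepsilon}\alpha_2$; here I would note that because $e^{\varepsilon} > 0$ for every real $\varepsilon$, multiplication by $e^{\varepsilon}$ preserves sign, so $\operatorname{sgn}(\alpha_2^{*}) = \operatorname{sgn}(\alpha_2)$ and in particular $\alpha_2^{*} = 0$ iff $\alpha_2 = 0$.

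Combining these five sub-cases gives $S(\operatorname{Ad}_g(\mathcal{X})) = \operatorname{sgn}(\alpha_2^{*}) = \operatorname{sgn}(\alpha_2) = S(\mathcal{X})$, establishing the invariance. Since every adjoint generator has been checked, the composition property of $\operatorname{Ad}$ extends the invariance to all $g$ in the connected component of the symmetry group generated by $X_1,\dots,X_5$, which is what is needed.

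There isn't really a ``hard part'' here beyond bookkeeping; the only subtlety worth flagging in writing is the strict positivity of $e^{\varepsilon}$ in the $X_1$ case, since that is the unique place where the sign could a priori have flipped, and it is precisely the positivity of the exponential that rules this out. The rest of the argument is a direct reading of Table \ref{t2} and mimics the structure already used in Lemma \ref{lem_1}.
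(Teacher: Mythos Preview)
Your proposal is correct and follows exactly the approach the paper intends: the paper's own proof consists solely of the sentence ``The proof is similar to the proof of the above Lemma,'' referring to Lemma~\ref{lem_1}, and your case-by-case check of the five adjoint actions from Table~\ref{t2} is precisely that similar argument written out in full. The only substantive case is indeed $i=1$, where positivity of $e^{\varepsilon}$ preserves $\operatorname{sgn}(\alpha_2)$, just as positivity of $e^{\gamma\varepsilon}$ preserved $\operatorname{sgn}(\alpha_4)$ in Lemma~\ref{lem_1}.
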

\begin{proof}
The proof is similar to the proof of the above Lemma.
    \end{proof}
\begin{lem}
$\mathcal{X}=\sum_{i=1}^{5} \alpha_{i} X_{i}$ with $\alpha_{1}=\alpha_{4}=\alpha_{3}=0$. Suppose $\operatorname{Ad}\left(\exp \left(\varepsilon X_{i}\right) \mathcal{X}\right)=\sum_{j=1}^{5} \alpha_{j}^{*} X_{j}$ also satisfies $\alpha_{1}^{*}=\alpha_{4}^{*}=\alpha_{3}^{*}=0$, for each $i=1,2,3,4,5$. Then $T^{*}=T$, i.e., $T(\mathcal{X})=T(\operatorname{Ad_{g}}(\mathcal{X}))$, where
$$
T= \begin{cases}\operatorname{sgn}\left(\alpha_{5}\right) & ;\text { if } \alpha_{1}=\alpha_{4}=\alpha_{3}=0 \text{,} \\ 0 &; \text { otherwise. }\end{cases}
$$
\end{lem}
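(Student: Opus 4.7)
The plan is to mirror the blueprint of the previous two sign-invariants (Lemma \ref{lem_1} and the $S$-invariant lemma), using the hypothesis $\alpha_{1}=\alpha_{3}=\alpha_{4}=0$ to eliminate most contributions to $\alpha_{5}$ row-by-row from Table \ref{t2}. Since $T$ is manifestly zero outside the stratum cut out by these three vanishing conditions, and the zero value is preserved trivially, the content of the lemma is that $\operatorname{sgn}(\alpha_{5})$ is not altered by any adjoint action that keeps us in that stratum.

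First I would dispose of the actions that move $\alpha_{5}$ only by terms that vanish under the hypothesis. The row for $\operatorname{Ad}(\exp(\varepsilon X_{2}))$ fixes every coefficient except $\alpha_{2}$, so $\alpha_{5}^{*}=\alpha_{5}$ and $T^{*}=T$ trivially. The rows for $\operatorname{Ad}(\exp(\varepsilon X_{4}))$ and $\operatorname{Ad}(\exp(\varepsilon X_{5}))$ update $\alpha_{5}$ by $\varepsilon\alpha_{3}$ and $-\gamma\varepsilon\alpha_{1}$ respectively, both of which are zero by assumption, so again $\alpha_{5}^{*}=\alpha_{5}$. For $\operatorname{Ad}(\exp(\varepsilon X_{1}))$ Table \ref{t2} gives $\alpha_{5}^{*}=e^{\gamma\varepsilon}\alpha_{5}$, and because $e^{\gamma\varepsilon}>0$ the sign is automatically preserved.

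The only delicate case is $\operatorname{Ad}(\exp(\varepsilon X_{3}))$, since the rotation genuinely mixes the $X_{4}$ and $X_{5}$ components. With $\alpha_{4}=0$, Table \ref{t2} reduces to $\alpha_{4}^{*}=\alpha_{5}\sin\varepsilon$ and $\alpha_{5}^{*}=\alpha_{5}\cos\varepsilon$. The hypothesis forces $\alpha_{4}^{*}=0$, and there are exactly two subcases: either $\alpha_{5}=0$, in which case both $T$ and $T^{*}$ equal $0$ and there is nothing to show, or $\sin\varepsilon=0$; restricting to the connected component of the identity in the adjoint group (as is standard for the optimal-system construction) picks out $\cos\varepsilon=1$, whence $\alpha_{5}^{*}=\alpha_{5}$ and the sign is preserved.

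The main obstacle is exactly this rotational case, because a rotation by $\pi$ would formally flip the sign of $\alpha_{5}$; I would address this either by restricting the adjoint group to its identity component, as just indicated, or by noting that $-X_{5}$ and $X_{5}$ generate the same one-dimensional subalgebra and so belong to the same equivalence class in the optimal system. With that understood, aggregating the five checks produces $T(\operatorname{Ad}_{g}(\mathcal{X}))=T(\mathcal{X})$ on the whole stratum, completing the proof in exactly the format of the preceding lemma.
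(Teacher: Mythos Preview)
Your argument is correct and follows exactly the paper's approach: a row-by-row check through Table~\ref{t2} mirroring Lemma~\ref{lem_1}, with the $X_{1}$-action giving a positive scaling and the remaining actions either fixing $\alpha_{5}$ or being excluded by the hypothesis. The paper itself simply writes ``The proof is similar as discussed in Lemma~\ref{lem_1}'' and does not spell out the rotational subtlety you raise for $\operatorname{Ad}(\exp(\varepsilon X_{3}))$; your treatment of that case is in fact more careful than what the paper provides.
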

\begin{proof}
     The proof is similar as discussed in Lemma \ref{lem_1}.
\end{proof}
\subsection{Construction of Optimal System}
\quad \quad To build an optimal system of Eq. \eqref{u} using the approach mentioned in \cite{hu2015direct}, we suppose 
$S=\sum_{i=1}^{5} \alpha_{i} X_{i} \hspace{0.2cm} \text{and} \hspace{0.2cm} M=\sum_{j=1}^{5} b_{i} X_{i}$ as two elements of Lie algebra $L^{5}$. The model's adjoint transformation equation is given by
\begin{equation}\label{urv}
\left(b_{1}, b_{2}, b_{3}, b_{4}, b_{5}\right)=\left(\alpha_{1}, \alpha_{2}, \alpha_{3}, \alpha_{4}, \alpha_{5}\right) \cdot \mathcal{A} \text{.}
\end{equation}
Further, $\mathcal{A}\left(\varepsilon_{1}, \varepsilon_{2},\varepsilon_{3} ,\varepsilon_{4}, \varepsilon_{5}\right)$ gives a transformed $\mathcal{X}$ as follows:
\begin{equation}\label{424}
\begin{split}
\mathcal{A}\left(\varepsilon_{1}, \varepsilon_{2}, \varepsilon_{3} ,\varepsilon_{4},\varepsilon_{5}\right) \cdot \mathcal{X}=& \alpha_{1} X_{1}+\left(-\alpha_{1} \varepsilon_{2} +\alpha_{2} \right) e^{\varepsilon_{1}} X_{2}+\alpha_{3} X_{3}+ \left(-\gamma \alpha_{1} \sigma_{1}+\alpha_{3} \sigma_{2}+\alpha_{4} \cos{\varepsilon_{3}}+\alpha_{5} \sin{\varepsilon_{3}}\right) e^{\gamma \varepsilon_{1}} X_{4}\\
& +\left(\gamma \alpha_{1} \sigma_{2}+\alpha_{3} \sigma_{1}-\alpha_{4} \sin{\varepsilon_{3}}+\alpha_{5}\cos{\varepsilon_{3}}\right) X_{5}\text{.} 
    \end{split}
    \end{equation}
By definition of $\mathcal{X}$,  $A\left(\varepsilon_{1}, \varepsilon_{2}, \varepsilon_{3}, \varepsilon_{4}, \varepsilon_{5}\right) \cdot \mathcal{X}$ produces one-dimensional Lie algebras for any $\varepsilon_{1}, \varepsilon_{2}, \varepsilon_{3}, \ldots \varepsilon_{5}$. This gives us the freedom to select different values of $\varepsilon_{i}$ to demonstrate the equivalence class of $\mathcal{X}$, which should be considerably simpler than $\mathcal{X}$. It is important to mention that the fundamental invariants that were obtained in this study are of degree one. Therefore, we evaluate the subsequent four cases in accordance with the invariants' degree:
$$
\begin{aligned}
\Lambda_{1}=\left\{\alpha_{1}=1,
\alpha_{3}=k\right\} \text{,} \quad \Lambda_{2}=\left\{\alpha_{1}=0,
\alpha_{3}=1\right\}\text{,} \quad \Lambda_{3}=\left\{\alpha_{1}=1, \alpha_{3}=0\right\} \text{,}\quad \Lambda_{4}=\left\{\alpha_{1}=0, \alpha_{3}=0\right\} \text {, }\hspace{0.1cm} \text{with} \hspace{0.1cm} k \neq 0\text{.}
\end{aligned}
$$
\subsection{Case 1}
$\alpha_{1}=1\text{,} \hspace{0.1cm} \alpha_{3}=k \neq 0\text{.}$ Choosing a representative element
$
\hat{\mathcal{X}}=X_{1}+\alpha_{2} X_{2}+k X_{3}+\alpha_{4} X_{4}+\alpha_{5} X_{5} \text {,} 
$
and putting $b_{i}=0\text{,} \hspace{0.2cm} \text{for} \hspace{0.2cm} i=2,4,5 \hspace{0.2cm} \text{and} \hspace{0.2cm} b_{1}=1\text{,} \hspace{0.2cm} b_{3}=k$ \hspace{0.1cm} in Eqs. \eqref{urv} and \eqref{424} we get the solution as
$$
\varepsilon_{2}=\alpha_{2} \hspace{0.2cm} \text{,}  \hspace{0.2cm} \varepsilon_{4}=\frac{\gamma \alpha_{4}-k \alpha_{5}}{k^{2}+\gamma ^{2}} \text{,}  \hspace{0.2cm} \varepsilon_{5}=\frac{\gamma \alpha_{5}+k \alpha_{4}}{k^{2}+\gamma^{2}}\text{.}
$$
Thus, the action of adjoint maps $\operatorname{Ad}\left(\exp \left(\varepsilon_{2} X_{2}\right)\right),\hspace{0.1cm} \operatorname{Ad}\left(\exp \left(\varepsilon_{4} X_{4}\right)\right)$ and $\operatorname{Ad}\left(\exp \left(\varepsilon_{5} X_{5}\right)\right)$ will eliminate the coefficients of $X_{2}, X_{4} \hspace{0.1cm} \text{and} \hspace{0.1cm} X_{5}$, respectively, from $\hat{\mathcal{X}}$ and 
we get $\hat{\mathcal{X}}=X_{1}+k X_{3}
$.
\subsection{Case 2} 
$\alpha_{1}=0\text{,} \hspace{0.1cm} \alpha_{3}=1\text{.}$ Choosing a representative element
$
\hat{\mathcal{X}}=\alpha_{2} X_{2}+X_{3}+\alpha_{4} X_{4}+\alpha_{5} X_{5}\text{,}
$
and putting $b_{i}=0\text{,} \hspace{0.2cm} \text{for} \hspace{0.2cm} i=1,2,4,5\hspace{0.2cm} \text{and} \hspace{0.2cm} b_{3}=1$ in Eqs. \eqref{urv} and \eqref{424}, we get the solution as 
$$
\varepsilon_{5}=\alpha_{4} \hspace{0.2cm} \text{,} \hspace{0.2cm} \varepsilon_{4}=-\alpha_{5}\text{.}
$$
Thus, the action of adjoint maps $\operatorname{Ad}\left(\exp \left(\varepsilon_{4} X_{4}\right)\right)\hspace{0.1cm}\text{and} \hspace{0.1cm}\operatorname{Ad}\left(\exp \left(\varepsilon_{5} X_{5}\right)\right)$ will eliminate the coefficients of $X_{4} \hspace{0.2cm} \text{and} \hspace{0.2cm} X_{5}$, respectively, from $\hat{\mathcal{X}}$. Thus, $\hat{\mathcal{X}}=b_{2} X_{2}+X_{3}$. Suppose $b_{2} \neq 0$, then the group generated by $X_{1}$ scales the coefficient of $X_{2}$, i.e., if $b_{2}>0$, then $\hat{\mathcal{X}}$ is equivalent to ${\hat{\mathcal{X}}_{1}}=X_{2}+X_{3}$ and if $b_{2}<0$, then $\hat{\mathcal{X}}$ is equivalent to ${\hat{\mathcal{X}}_{2}}=-X_{2}+X_{3}$. If $b_{2}=0$, then $\hat{\mathcal{X}}$ is equivalent to $\hat{\mathcal{X}_{3}}=X_{3}$.
\subsection{Case 3} 
$\alpha_{1}=1\text{,} \hspace{0.1cm} \alpha_{3}=0\text{.}$ Choosing a representative element
$\hat{\mathcal{X}}=X_{1}+\alpha_{2} X_{2}+\alpha_{4} X_{4}+\alpha_{5} X_{5}$, putting $b_{i}=0\text{,} \hspace{0.2cm} \text{for} \hspace{0.2cm} i=2,3,4,5 \hspace{0.2cm} \text{and} \hspace{0.2cm} b_{1}=1$ in Eqs. \eqref{urv} and \eqref{424}, we get
$$
\varepsilon_{2}=\alpha_{2}\hspace{0.2cm}\text{,} \hspace{0.2cm} \varepsilon_{4}=\frac{\alpha_{4}}{\gamma} \hspace{0.2cm} \text{,} \hspace{0.2cm} \varepsilon_{5}=\frac{\alpha_{5}}{\gamma}\text{.}
$$
Thus, the adjoint action of maps $\operatorname{Ad}\left(\exp \left(\varepsilon X_{2}\right)\right) \hspace{0.1cm}\text{,} \hspace{0.1cm} \operatorname{Ad} \left(\exp \left(\varepsilon X_{4}\right)\right)\hspace{0.1cm}\text{and} \hspace{0.1cm} A d\left(\exp \left(\varepsilon X_{5}\right)\right)$ will  eliminate the coefficients of $X_{2}, X_{4} \hspace{0.1cm} \text{and} \hspace{0.1cm} X_{5}$  from $\hat{\mathcal{X}}$.
Thus, $\hat{\mathcal{X}} = X_{1}$.
\subsection{Case 4}
$\alpha_{1}=0 \text{,}\hspace{0.2cm} \alpha_{3}=0\text{.}$ Choosing a representative element $\hat{\mathcal{X}}=\alpha_{2} X_{2}+\alpha_{4} X_{4}+\alpha_{5} X_{5}$ and putting $b_{i}=0\text{,} \hspace{0.2cm} \text{for} \hspace{0.2cm} 1\leq i \leq 5$ in Eqs. \eqref{urv} and \eqref{424}, we get
$$
\hat{\mathcal{X}}=\alpha_{2} X_{2}+\alpha_{4} X_{4}+\alpha_{5} X_{5}\text{.}
$$
The results acquired in the preceding sections can be expressed in the following theorem:
\begin{thm}
The vector fields generate the optimal one-dimensional subalgebra of the Lie algebra $\mathbb{L}$ of the model.
$$
\begin{aligned}
&\mathcal{X}_{1,k}=X_{1}+k X_{3}\text{,}\\
&\mathcal{X}_{2,\mu}=\mu_{1} X_{2}+X_{3}\text{,}\\
&\mathcal{X}_{3}=X_{1}\text{,}\\
&\mathcal{X}_{4, a_2, a_4, a_5}=a_{2} X_{2}+a_{4} X_{4}+a_{5} X_{5}\text{,}
\end{aligned}
$$
where $\mu_{1}$ is a parameter with $\mu_{1} \in \{-1,0,1\}$. Also, $k$ is to be determined according to earlier mentioned in case $1$, and $a_{j}^{'}s$ are arbitrary constants $\forall \hspace{0.1cm} j=2,4,5 \text{.}$
\end{thm}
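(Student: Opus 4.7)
The plan is to take an arbitrary element $\mathcal{X}=\sum_{i=1}^{5}\alpha_i X_i\in L^{5}$ and exhibit a $g\in\widetilde{G}$ such that $\mathrm{Ad}_g(\mathcal{X})$ coincides with one of the four families listed. The whole argument rests on the invariant analysis done in the preceding subsections: every invariant of $L^{5}$ depends only on $\alpha_1$ and $\alpha_3$, so these two numbers cannot be altered by $\mathrm{Ad}_g$ and therefore serve as discriminators that cut $L^{5}$ into four disjoint, equivalence-closed strata, namely $\Lambda_1,\Lambda_2,\Lambda_3,\Lambda_4$. The conditional sign invariants of Lemmas on $P,Q,R,S,T$ further allow me to normalize a non-zero $\alpha_1$ or $\alpha_3$ to $1$ by an appropriate scaling from $\mathrm{Ad}(\exp(\varepsilon X_1))$ followed by a sign choice, so that only the four normalized strata need to be inspected.

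Within each stratum I would solve the transformation equations \eqref{urv} and \eqref{424} for $\varepsilon_1,\dots,\varepsilon_5$ so as to zero out as many of $b_2,b_4,b_5$ as the stratum permits. For $\Lambda_1$ the three parameters $\varepsilon_2,\varepsilon_4,\varepsilon_5$ computed in Case~1 kill $b_2,b_4,b_5$ simultaneously (the denominator $k^{2}+\gamma^{2}$ is non-zero precisely because $\alpha_3=k\neq 0$), so the representative collapses to $X_1+kX_3=\mathcal{X}_{1,k}$. In $\Lambda_3$ the analogous choice in Case~3 kills all three secondary coefficients and leaves $\mathcal{X}_3=X_1$. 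In $\Lambda_2$ the coefficients $b_4,b_5$ are cleared by $\varepsilon_4=-\alpha_5,\,\varepsilon_5=\alpha_4$; the remaining $\alpha_2$ cannot be cleared because $\alpha_1=0$ switches off $\mathrm{Ad}(\exp(\varepsilon X_2))$, but the scaling generated by $\mathrm{Ad}(\exp(\varepsilon X_1))$ multiplies $\alpha_2$ by $e^{\varepsilon_1}$, so the sign-invariant $\mathrm{sgn}(\alpha_2)$ yields the parameter $\mu_1\in\{-1,0,1\}$ and hence the family $\mathcal{X}_{2,\mu}$. In $\Lambda_4$ the condition $\alpha_1=\alpha_3=0$ disables precisely the adjoint actions that would otherwise reduce $(\alpha_2,\alpha_4,\alpha_5)$, so the representative remains $a_2X_2+a_4X_4+a_5X_5=\mathcal{X}_{4,a_2,a_4,a_5}$.

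Finally, disjointness of the four families follows immediately from the invariants: any two representatives lying in different $\Lambda$-strata have different values of the invariant pair $(M,N)=(\alpha_1,\alpha_3)$ or of the conditional sign invariants, hence are inequivalent.

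The step I expect to be the most delicate is Case~4. Although the rescaling from $X_1$ is inactive there, the rotation $\mathrm{Ad}(\exp(\varepsilon X_3))$ still acts non-trivially on the pair $(\alpha_4,\alpha_5)$, and one must justify keeping three free parameters $a_2,a_4,a_5$ rather than further reducing them modulo $\mathrm{SO}(2)$. A careful accounting of which residual adjoint actions remain available once $\alpha_1=\alpha_3=0$, together with a check against the conditional invariants $R,S,T$, is what secures the claim in this last stratum; this bookkeeping, more than any computation, is the delicate point of the argument.
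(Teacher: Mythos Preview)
Your plan is correct and mirrors the paper's approach: the exhaustiveness of the list is established exactly as you describe, by the four-case reduction carried out in the subsections preceding the theorem (Cases~1--4, equations~\eqref{urv}--\eqref{424}), while the paper's proof of the theorem itself consists solely of verifying inequivalence of the representatives by tabulating the invariants $K,M,N,P,Q,R,S,T$ on each. Your flag on Case~4 is apt---the paper does not reduce further under the residual $\mathrm{Ad}(\exp(\varepsilon X_3))$ and $\mathrm{Ad}(\exp(\varepsilon X_1))$ actions and simply records the three-parameter family as stated, relying on the conditional invariants $R,S,T$ in the table to separate its various subcases.
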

\begin{proof}
It suffices to prove the theorem by demonstrating that the $\mathcal{X}_i^{'} s$ are distinct from one another. To achieve this, we will construct a table illustrating the invariants in Table \ref{t4}. Analysis of the contents of Table \ref{t4} reveals that the subalgebras are different.
\renewcommand{\arraystretch}{1.0}
\begin{table}[H]
    \centering
    \begin{tabular}{|c|cccccccc|}
    \hline
      & K & M & N & P & Q & R & S & T\\\hline
       $\mathcal{X}_{1,1}(X_1+X_3)$  & $2 \gamma^2-1$ & 1 & 1 & 1 & 1 & 0 & 0 & 0\\
        $\mathcal{X}_{1,-1}(X_1-X_3)$ & $2 \gamma^2-1$ & 1 & $-1$ & 1 & 1 & 0 & 0 & 0\\
       $\mathcal{X}_{2,1}(X_2+X_3)$  & $-2$ & 0 & 1 & 1 & 1 & 0 & 1 & 0\\
        $\mathcal{X}_{2,0}(X_3)$ & $-2$ & 0 & 1 & 1 & 1 & 0 & 0 & 0\\
        $\mathcal{X}_{2,-1}(-X_2+X_3)$ & $-2$ & 0 & 1 & 1 & 1 & 0 & -1 & 0\\
        $\mathcal{X}_{3}(X_1)$ & $2 \gamma^2 +1$  & 1 & 0 & 1 & 1 & 0 & 0 & 0\\
       $\mathcal{X}_{4, 0, a_4, a_5; a_4, a_5 \neq 0}(a_4 X_4+ a_5 X_5)$  & 0 & 0 & 0 & 0 & 1 & 0 & 0 & 0\\
       $\mathcal{X}_{4, a_2, 0, a_5; a_2, a_5 \neq 0}(a_2 X_2+ a_5 X_5)$& 0 & 0 & 0 & 1 & 1 & 0 & $a_2$ & $a_5$\\
        $\mathcal{X}_{4, a_2, a_4, 0; a_2, a_4 \neq 0}(a_2 X_2+ a_4 X_4)$ & 0 & 0 & 0 & 1 & 1 & $a_4$ & $a_2$ & $0$\\
        $\mathcal{X}_{4, 0, 0, a_5; a_5 \neq 0}(a_5 X_5)$ & 0 & 0 & 0 & 0 & 1 & 0 & 0 & $a_5$\\
        $\mathcal{X}_{4, a_2, 0, 0; a_2 \neq 0}(a_2 X_2)$ & 0 & 0 & 0 & 1 & 0 & 0 & $a_2$ & 0\\
        $\mathcal{X}_{4, 0, a_4, 0; a_4 \neq 0}(a_4 X_4)$ & 0 & 0 & 0  & 0 & 1 & $a_4$ & 0 & 0 \\ \hline
    \end{tabular}
    \caption{Value of Invariant Function}
    \label{t4}
\end{table}
\end{proof}
\section{Group Invariant Solutions}
\quad \quad Constructing a one-dimensional optimal system reduces the complexity of finding group-invariant solutions by identifying key symmetries in the model. This approach simplifies the classification of solutions, revealing deeper insights into the equation's structure. Invariant solutions often correspond to significant physical or geometric phenomena. 
\subsection{Invariant Solution Using Optimal System}
\subsubsection{\texorpdfstring{$\mathcal{X}=X_{1}+k X_{3} \hspace{0.1cm} \text{,} \hspace{0.1cm} k\neq 0$}{X = X1 + k X3 ; k != 0}}\label{urvj}
The similarity variables can be derived by solving the characteristic equations
\begin{equation}
\frac{d x}{\frac{(\theta-2) x}{2(\theta-1)}+k y}=\frac{d y}{\frac{(\theta-2) y}{2(\theta-1)}-k x}=\frac{d t}{t}=\frac{d \phi}{\frac{-\phi}{(\theta-1)}} \text{.}
\end{equation}
Then, we get
\begin{equation}
X= t^{\frac{-(\theta-2)}{2(\theta-1)}}[x \sin (k \ln (t))+y \cos (k \ln (t))]\text{,}\quad
Y=t^{\frac{-(\theta-2)}{2(\theta-1)}}[x \cos (k \ln (t))-y \sin (k \ln (t))]\text{,}\quad
\phi=t^{-\frac{1}{(\theta-1)}}\cdot F(X, Y)\text{.}
\end{equation}
Here, $X$ and $Y$ are two new independent variables and  $F$ is a new dependent variable. Therefore, the  Eq. \eqref{u} is converted into
\begin{equation}\label{r}
\begin{split}
&2 h(\theta-1) F^{\theta}-4(\theta-1)\left[F_{X}^{2}+F_{Y}^{2}\right]+2 k(\theta-1) \left[Y F_{X}-X F_{Y}\right]-\\
&(\theta-2){\left[X F_{X}+Y F_{Y}\right]-4\left[(\theta-1) \left(F_{X X}+F_{Y Y}\right)+\frac{1}{2}\right] F=0}\text{.} 
\end{split}
\end{equation}
Again, reducing Eq. \eqref{r} by point symmetries, the following vector fields are found to span the symmetry group of Eq. \eqref{r}
\begin{equation}
\xi_{X}=-d_{1} Y, \quad \xi_{Y}=d_{1} X, \quad \xi_{F}=0\text{,}
\end{equation}
where $d_{1}$ is an arbitrary constant.
It leads to the following characteristic equations:
\begin{equation}
\frac{d X}{-d_{1} Y}=\frac{d Y}{d_{1} X}=\frac{d F}{0}\text{,}
\end{equation}
which gives $F(X, Y)=G(\lambda)$, where $\lambda=X^{2}+Y^{2}$, and $G(\lambda)$ is a similarity function of $\lambda$. Thus, the second reduction of the model \eqref{u} by the similarity transform gives:
\begin{equation}\label{b}
-h(\theta-1) G^{\theta}+8 \lambda(\theta-1) G^{\prime 2}+8(\theta-1) G G^{\prime}+\lambda(\theta-2) G^{\prime}+8  \lambda(\theta-1) G G^{\prime \prime}
+G=0 \text{.}
\end{equation}
For $h=0$, we find the solution for the Eq. \eqref{b}.
\begin{equation}\label{z}
8 \lambda(\theta-1) G^{\prime 2}+8(\theta-1) G G^{\prime}+\lambda(\theta-2) G^{\prime}+ 8 \lambda(\theta-1) G G^{\prime \prime}
+G=0 \text{.}
\end{equation}
Particular solution for Eq. \eqref{z} is as follows:
\begin{equation}
    G(\lambda)=-\frac{\lambda}{16}\text{.}
\end{equation}
So, the invariant solution for the model \eqref{u} is as follows: 
\begin{equation}\label{533}
    \phi(x,y,t)=-\frac{x^{2}+y^{2}}{16 t} \text{.}
\end{equation}
\begin{figure}[H]
\begin{minipage}{.3\textwidth}
    \subfloat[3D profile at $t=10$]{\includegraphics[width=\textwidth]{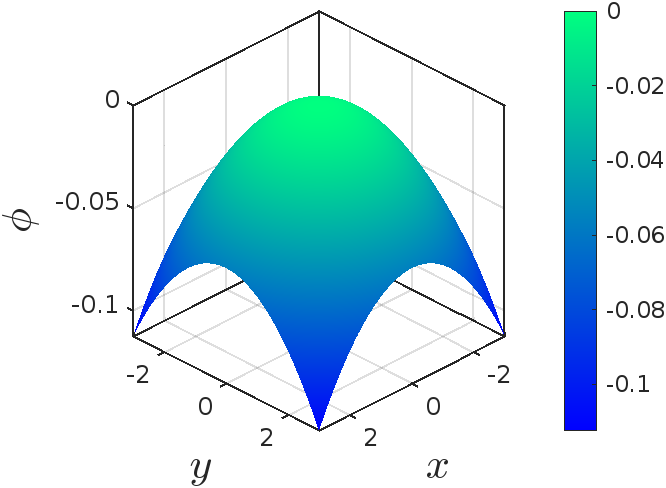}\label{fig:sub_a}}
\end{minipage}
\hfill    
\begin{minipage}{.3\textwidth}
    \subfloat[Contour sketch]{\includegraphics[width=\textwidth]{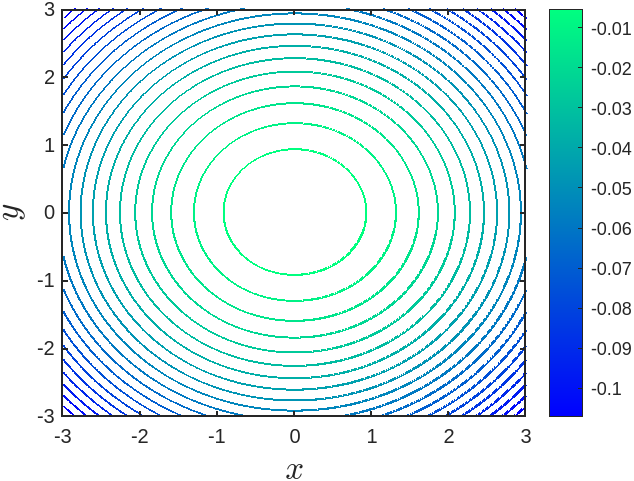}\label{fig:sub_b}}
\end{minipage}
\hfill    
\begin{minipage}{.3\textwidth}
    \subfloat[2D sketch for distinct $t$ at $x=2$ ]{\includegraphics[width=\textwidth]{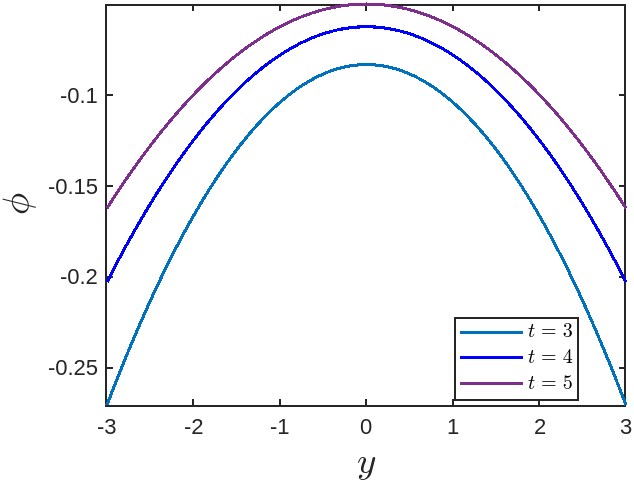}\label{fig:sub_c}}
\end{minipage}
    \caption{Solution profile of density $\phi$ to Eq. \ref{533}.}\label{fig:1}
\end{figure}

The solution of Eq. \eqref{533} is illustrated in Figure \ref{fig:1}. Figure \ref{fig:sub_a} illustrates that population density is concentrated near the origin and becomes less dense further away. In a physical sense, this is a scenario where the population is centered around a resource-rich area, with density diminishing as conditions become less favorable. Figure \ref{fig:sub_b} is the contour plot corresponding to the solution \eqref{533}. Figure \ref{fig:sub_c} reflects how a biological population might start with a more dispersed distribution and gradually concentrate in a central region over time.

In this instance, we set \(h=0 \), resulting in a negative population density. This scenario may occur when a contaminated lake exhibits zero fish reproduction (\( h = 0 \)), although environmental resistance (porous constant \(\theta\)) persists; negative population density could indicate severe environmental stress, leading the species toward extinction. This indicates unsustainable situations in which the habitat can no longer sustain the population.
\subsubsection{\texorpdfstring{$\mathcal{X}=b_{2} X_{2}+X_{3}$ \hspace{0.1cm} with \hspace{0.1cm} $b_{2} \in\{-1, 1\}$}{X = b2 X2 + X3 with b2 in {-1, 1}}}
The similarity variables can be determined by solving the characteristic equations.
\begin{equation}
\frac{d x}{y}=\frac{d y}{-x}=\frac{d t}{b_2}=\frac{d \phi}{0}\text{.}
\end{equation}
Then, we get
\begin{equation}
X=y \cos \left(\frac{t}{b_2}\right)+x \sin \left(\frac{t}{b_2}\right)\text{,} \quad
Y=x \cos \left(\frac{t}{b_2}\right)-y \sin \left(\frac{t}{b_2}\right)\text{,}\quad
\text{and}\hspace{0.2cm} \phi=F(X, Y)\text{.}
\end{equation}
Here, $X$ and $Y$ are the newly introduced independent variables, $b_{2} \neq 0$ and $F$ is a newly introduced dependent variable.
Therefore, the Eq. \eqref{u} is converted into
\begin{equation}\label{s}
2\left(F_{X}^{2}+F_{Y}^{2}\right)+2 F \left(F_{X X}+F_{Y Y}\right)-\frac{1}{b_2}\left(Y F_{X}-X F_{Y}\right)-h  F^{\theta}=0\text{.}
\end{equation}
Again, reducing Eq. \eqref{s} by point symmetries, the following vector fields are found to span the symmetry group of Eq. \eqref{s}
$$
\xi_{X}=d_{1} Y , \quad \xi_{Y}=-d_{1} X  , \quad \xi_{F}=0\text{,}
$$
where $d_{1}$ is an arbitrary constant. It leads to the following characteristic equations:
\begin{equation}
\frac{d X}{d_{1} Y}=\frac{d Y}{-d_{1} X}=\frac{d F}{0}\text{,}
\end{equation}
which gives $F(X, Y)=G(\lambda)$ with $\lambda=X^{2}+Y^{2}$, and $G(\lambda)$ is a similarity function of similarity variable $\lambda$.
Thus, the second reduction of the model by similarity transformation gives:
\begin{equation}
-h G^{\theta}+8\left[\lambda G^{\prime 2}+G\left(G^{\prime}+\lambda G^{\prime \prime}\right)\right]=0\text{.}
\end{equation}
The solution to the above ODE is given by
\begin{equation}
G(\lambda)  =\left[\frac{16}{h \lambda(\theta-2)^{2}}\right]^{\frac{1}{(\theta-2)}} \text{.}
\end{equation}
So, the invariant solution for the Model Eq. \eqref{u} is as follows: 
\begin{equation}\label{543}
\phi(x, y, t)  =\left[\frac{16}{h (\theta-2)^{2}\left(x^{2}+y^{2}\right)}\right]^{\frac{1}{\theta-2}}\text{.}
\end{equation}
\begin{figure}[H]
\centering
\begin{minipage}{.3\textwidth}
    \subfloat[3D graph for $\theta=0.055$ and $h=2$]{\includegraphics[width=\textwidth]{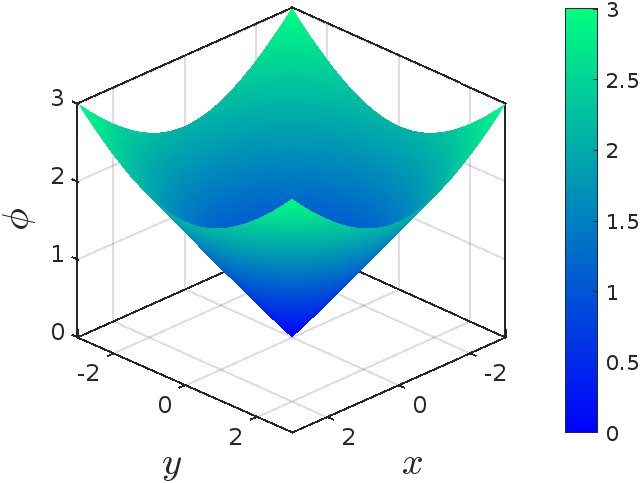}\label{fig:sub_2a}}
\end{minipage}
\hfill    
\begin{minipage}{.3\textwidth}
    \subfloat[3D graph for $\theta=0.555$ and $h=2$]{\includegraphics[width=\textwidth]{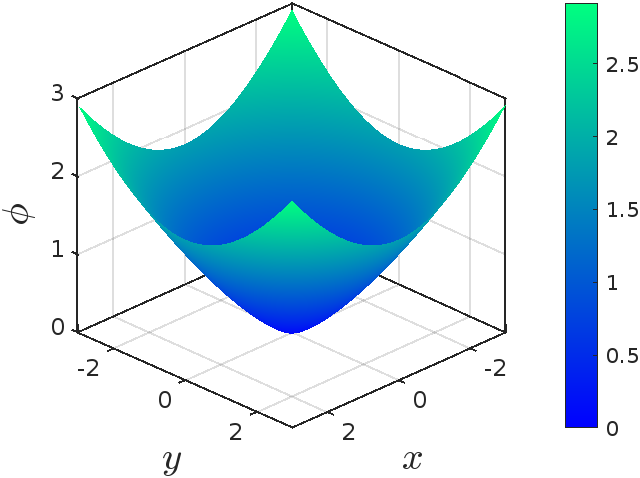}\label{fig:sub_2b}}
\end{minipage}
\hfill    
\begin{minipage}{.3\textwidth}
    \subfloat[3D graph for $\theta=0.955$ and $h=2$]{\includegraphics[width=\textwidth]{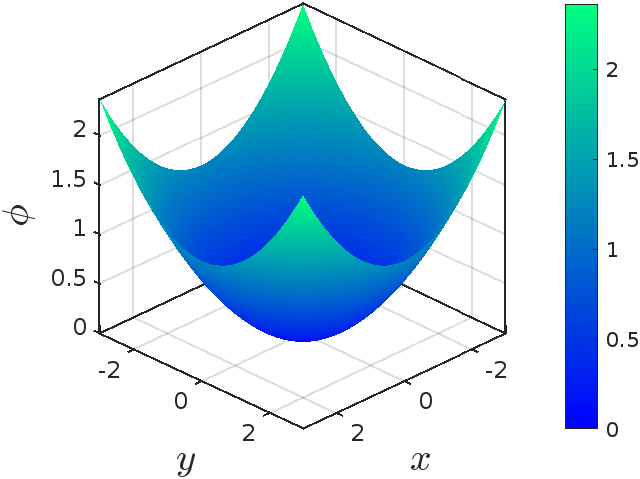}\label{fig:sub_2c}}
\end{minipage}
\quad
\begin{minipage}{.3\textwidth}
    \subfloat[Contour sketch for $\theta=0.055$ and $h=2$]{\includegraphics[width=\textwidth]{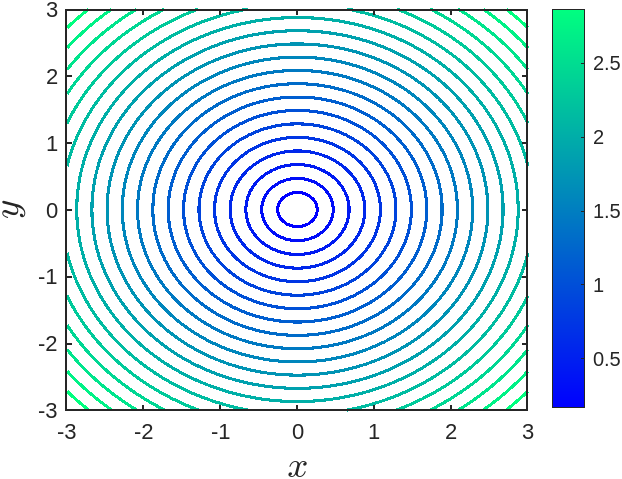}\label{fig:sub_2d}}
\end{minipage}
\hfill    
\begin{minipage}{.3\textwidth}
    \subfloat[Contour sketch for $\theta=0.555$ and $h=2$]{\includegraphics[width=\textwidth]{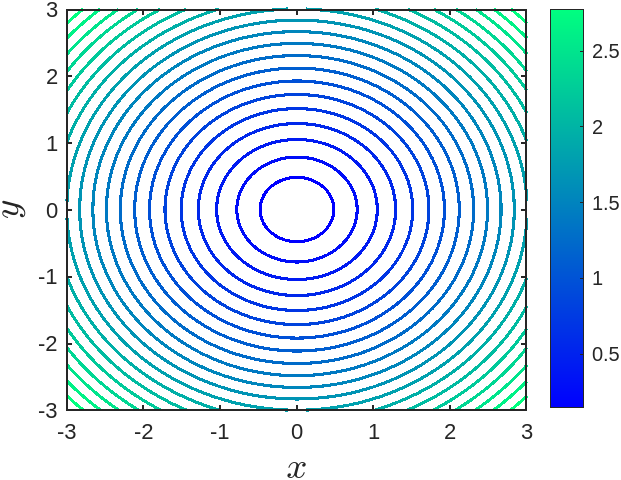}\label{fig:sub_2e}}
\end{minipage}
\hfill    
\begin{minipage}{.3\textwidth}
    \subfloat[Contour sketch for $\theta=0.955$ and $h=2$]{\includegraphics[width=\textwidth]{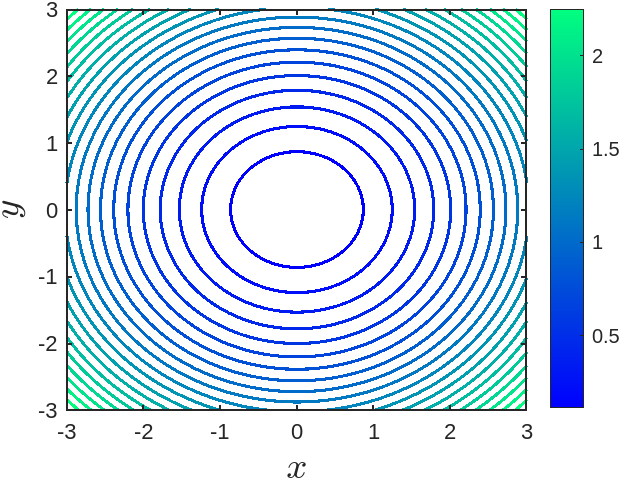}\label{fig:sub_2f}}
\end{minipage}
\quad
\vskip\floatsep
\begin{minipage}{.3\textwidth}
\centering
    \subfloat[2D sketch for distinct $\theta$ at $x=5$ and $h=2$]{\includegraphics[width=\textwidth]{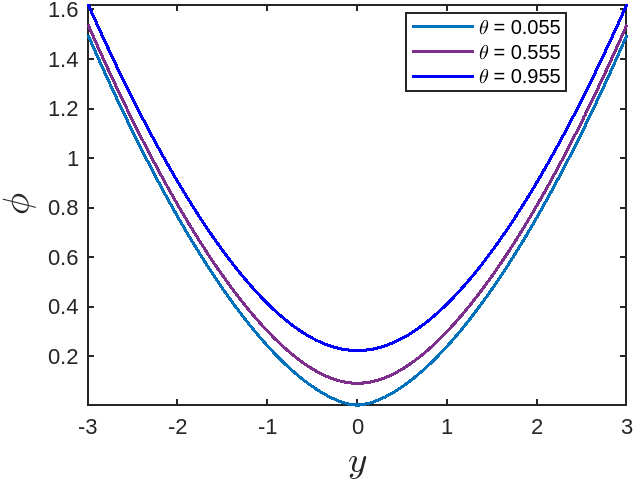}\label{fig:sub_2g}}
\end{minipage}
\quad
\begin{minipage}{.3\textwidth}
    \subfloat[2D sketch for distinct $h$ at $\theta=0.555$ and $x=5$]{\includegraphics[width=\textwidth]{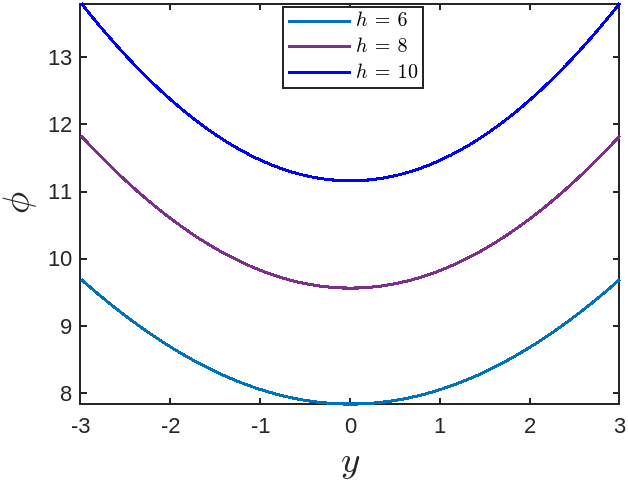}\label{fig:sub_2h}}
\end{minipage}
    \caption{Solution profile of density $\phi$ to Eq. \ref{543}.}\label{fig:2}
\end{figure}

For \( h=2 \), the solution of Eq. \eqref{543} is illustrated for various values of \( \theta \) in Figure \ref{fig:2}. Figure \ref{fig:sub_2a} shows that the small value of $\theta$ implies a very slow diffusion process (low porosity). The population is highly concentrated at the center, leading to a sharp peak. Figure \ref{fig:sub_2b} shows that the increase in $\theta$ means the medium is more porous, allowing the population to diffuse more easily. Figure \ref{fig:sub_2c} shows that with $\theta$ approaching 1, the medium becomes significantly more porous. Figures \ref{fig:sub_2d}, \ref{fig:sub_2e}, and \ref{fig:sub_2f} are the contour sketches. Figure \ref{fig:sub_2g} shows that the porous constant $\theta$ influences the spread or concentration of the population density. Smaller values of $\theta$ lead to a more concentrated population, while larger values result in a broader distribution. Figure \ref{fig:sub_2h} shows that as the population growth rate $h$ increases, the population density $\phi$ increases uniformly across all $y$ values. The symmetric shape around $y=0$ suggests an even distribution of density away from the center. 
\subsubsection{\texorpdfstring{$\mathcal{X}=X_{1}$}{X = X1}}
The similarity variables can be derived by solving the characteristic equations:
\begin{equation}
\frac{d x}{\frac{(\theta-2) x}{2(\theta-1)}}=\frac{d y}{\frac{(\theta-2) y}{2(\theta-1)}}=\frac{d t}{t}=\frac{d \phi}{\frac{-\phi}{(\theta-1)}}\text{.}
\end{equation}
Then, we get
\begin{equation}
\begin{split}
X=x \cdot t^{\frac{-(\theta-2)}{2(\theta-1)}} \text{,} \quad Y=y \cdot t^{-\frac{(\theta-2)}{2(\theta-1)}}\text{,}\quad \text{and} \hspace{0.2cm} \phi=t^{-\frac{1}{(\theta-1)}}\cdot F(X, Y)\text{.}
\end{split}
\end{equation}
Here, $X$ and $Y$ are newly introduced independent variables and  $F$ is a newly introduced dependent variable.
Therefore, the Eq. \eqref{u} is converted into
\begin{equation}\label{d}
-2 h(\theta-1) F^{\theta}+4(\theta-1)\left(F_{X X}+F_{Y Y}\right) F+2 F+4(\theta-1) \left(F_{X}^{2}+F_{Y}^{2}\right)+(\theta-2)\left(X F_{X}+Y F_{Y}\right)=0 \text{.}
\end{equation}
By applying point symmetries to Eq. \eqref{d} and simplifying it, we obtain the vector fields that form the symmetry group of Eq. \eqref{d}:
$$
\xi_{X}=-d_{1} Y, \quad \xi_{Y}=d_{1} X, \quad \xi_{F}=0\text{,}
$$
where $d_{1}$ is an arbitrary constant. It leads to the following characteristic equations
\begin{equation}
\frac{d X}{-d_{1} Y}=\frac{d Y}{d_{1} X}=\frac{d F}{0}\text{,}
\end{equation}
which gives $F(X, Y)=G(\lambda)$ with $\lambda=X^{2}+Y^{2}$, and $G(\lambda)$ is a similarity function of $\lambda$. Thus, the second reduction of the model by similarity transformation gives:
\begin{equation}\label{x}
-h(\theta-1) G^{\theta}+8 \lambda(\theta-1) G^{\prime 2}+(8(\theta-1) G+\lambda(\theta-2)) G^{\prime}+G\left(1+8 \lambda(\theta-1) G^{\prime \prime}\right)=0\text{.}
\end{equation}
The equation mentioned in Eq. \eqref{x} resembles the one derived in subsection \ref{urvj}. In this case, we obtain the identical outcome as previously addressed in section \ref{urvj}.  
\subsubsection{\texorpdfstring{$\mathcal{X}=\alpha_{2} X_{2}+\alpha_{4} X_{4}+\alpha_{5} X_{5}$}{X = α2 X2 + α4 X4 + α5 X5}}
The similarity variables can be derived by solving the characteristic equation.
\begin{equation}
\frac{d x}{\alpha_{4}}=\frac{d y}{\alpha_{5}}=\frac{d t}{\alpha_{2}}=\frac{d \phi}{0}\text{.}
\end{equation}
Then, we get
\begin{equation}
\begin{split}
X=x-\frac{\alpha_{4} t}{\alpha_{2}} \text{,}\quad Y=y-\frac{\alpha_{5} t}{\alpha_{2}} \hspace{0.1cm} \text{,} \hspace{0.1cm} \alpha_{2} \neq 0 \text{,} \hspace{0.1cm} \text{and} \hspace{0.2cm} \phi=F(X,Y)\text{.}
\end{split}
\end{equation}
Here, $X$ and $Y$ are newly introduced independent variables and $F$ is a newly introduced dependent variable.
Therefore, Eq. \eqref{u} is converted into
\begin{equation}\label{w}
2 \alpha_{2} F\left(F_{X X}+F_{Y Y}\right)+2 \alpha_{2}\left(F_{X}^{2}+F_{Y}^{2}-\frac{h F^{\theta}}{2}\right)+\alpha_{4}  F_{x}+\alpha_{5}  F_{y}=0\text{.}
\end{equation}
Again, reducing Eq. \eqref{w}  by point symmetries, the following vector fields are found to span the symmetry group of \eqref{w}
$$
\xi_{x}=d_{1}, \quad \xi_{y}=d_{2}, \quad  \xi_{F}=0\text{,}
$$
where $d_{1} \hspace{0.1cm} \text{and} \hspace{0.1cm} d_{2}$ are the arbitrary constants.
It leads to the following characteristic equations
\begin{equation}
\frac{d X}{d_{1}}=\frac{d Y}{d_{2}}=\frac{d F}{0}\text{,}
\end{equation}
which gives $F(X, Y)=G(\lambda)$,\hspace{0.1cm} where $\lambda=X-\frac{d_{1}}{d_{2}} Y$ with $d_{2} \neq 0$.
Thus, the second reduction of the model by similarity transformation gives:
\begin{equation}\label{557}
-\alpha_{2} h  G^{\theta}+2 \alpha_{2} \left(1+\left(\frac{d_{1}}{d_{2}}\right)^{2}\right) G^{\prime 2}+\left(\alpha_{4}-\frac{d_{1}}{d_{2}} \alpha_{5}\right) G^{\prime}+2 \alpha_{2} \left(1+\left(\frac{d_{1}}{d_{2}}\right)^{2}\right) G  G^{\prime \prime}=0 \text {. }
\end{equation}
So, the Eq. \ref{557} has solution as follows:
\begin{equation}
    G(\lambda)=\frac{\alpha_5 d_1-\alpha_4 d_2}{2 \alpha_2 (d_1^2+d_2^2)}d_2 \lambda \text{.}
\end{equation}
So, the invariant solution for the model Eq. \eqref{u} is as follows:
\begin{equation}\label{559}
    \phi(x,y,t)=\frac{\alpha_5 d_1-\alpha_4 d_2}{2 \alpha_2 (d_1^2+d_2^2)}\left( d_2 x-\frac{d_2 \alpha_4 t}{\alpha_2}-d_{1}y+\frac{d_{1} \alpha_5 t}{\alpha_2}\right)\text{.}
\end{equation}
\begin{figure}[H]
\centering
\begin{minipage}{.3\textwidth}
    \subfloat[3D profile at $t=1$]{\includegraphics[width=\textwidth]{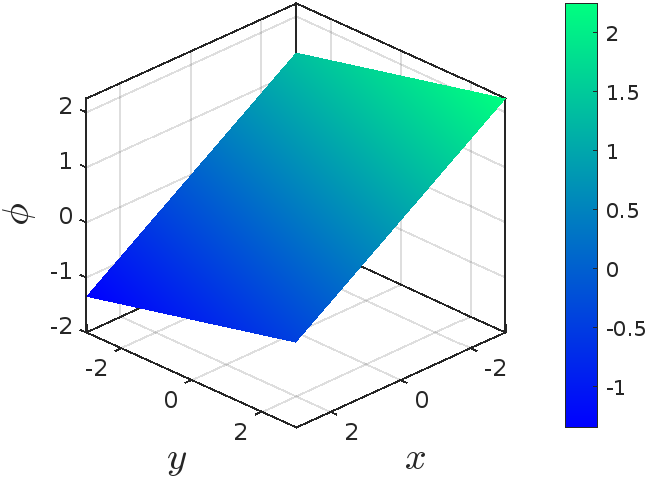}\label{fig:sub_3a}}
\end{minipage}
\quad 
\begin{minipage}{.3\textwidth}
    \subfloat[3D profile at $t=10$]{\includegraphics[width=\textwidth]{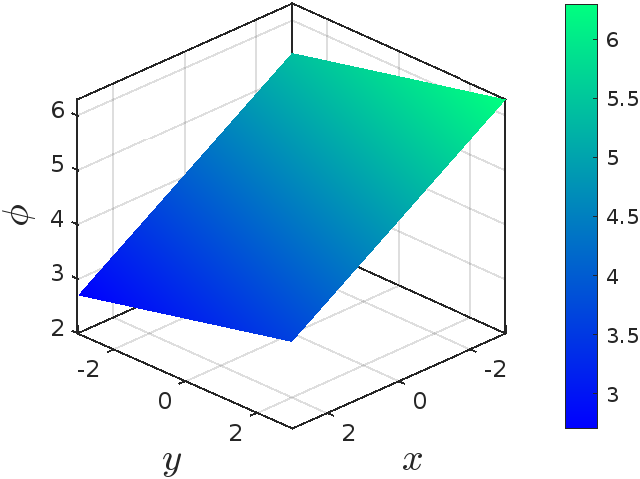}\label{fig:sub_3b}}
\end{minipage}
    \caption{Solution profile of density $\phi$ to Eq. \ref{559} for $\alpha_2 = 1, \alpha_4 = 1, \alpha_5 = 0, A_1 = 1,\hspace{0.1cm} \text{and}\hspace{0.1cm} A_2 = 3$.}\label{fig:3}
\end{figure}

We have pictured the solution of Eq. \ref{559} in Figures \ref{fig:3}. Figure \ref{fig:sub_4a} shows that at $t=1$, the population density is relatively low but varies across space. The graph shows a gradual change in density across the $x$ and $y$ axes, indicating that the population is spread out, with some areas slightly more populated than others.\\
Figure \ref{fig:sub_4b} shows that at $t=10$, the population density increases. This suggests that the population is starting to concentrate more in certain areas, leading to regions of higher density as seen in the steeper slopes of the plane.
\subsection{Invariant Solution Using Analysis of Parameters}
\begin{equation}
    \frac{dx}{\xi_1}= \frac{dy}{\xi_2}= \frac{dt}{\xi_3}= \frac{d\phi}{\xi_4}\text{.}
\end{equation}
\subsubsection{\texorpdfstring{$c_1=0$}{c1 = 0}}
The similarity variables can be derived by solving the characteristic equations
\begin{equation}
\frac{d x}{c_3 y +c_4}=\frac{d y}{-c_3 x+c_5}=\frac{d t}{c_2}=\frac{d \phi}{0}\text{.}
\end{equation}
Then, we get
\begin{equation}
X=\frac{(c_3 x-c_5) \sin (\frac{c_3 t}{c_2})+(c_3 y +c_4) \cos (\frac{c_3 t}{c_2})}{c_3}\text{,} \hspace{0.2cm} Y=\frac{(c_3 x-c_5) \cos (\frac{c_3 t}{c_2})-(c_3 y+c_4) \sin (\frac{c_3 t}{c_2})}{c_3}\text{,}\hspace{0.2cm}
\text{and} \hspace{0.2cm} \phi=F(X, Y)\text{.}
\end{equation}
Here, $X$ and $Y$ are two new independent variables and  $F$ is a new dependent variable. Therefore, Eq. \eqref{u} is converted into
\begin{equation}\label{rpa}
h F^{\theta}-2\left[F_{X}^{2}+F_{Y}^{2}\right]+\frac{c_3}{c_2}\left[Y F_{X}-X F_{Y}\right]-2\left[F_{X X}+F_{Y Y}\right] F=0\text{.} 
\end{equation}
Again, reducing Eq. \eqref{rpa} by point symmetries, the following vector fields are found to span the symmetry group of Eq. \eqref{rpa}:
\begin{equation}
\xi_{X}=d_{1} Y, \quad \xi_{Y}=-d_{1} X, \quad \xi_{F}=0\text{,}
\end{equation}
where $d_{1}$ is an arbitrary constant.
It leads to the following characteristic equations:
\begin{equation}
\frac{d X}{d_{1} Y}=\frac{d Y}{-d_{1} X}=\frac{d F}{0}\text{,}
\end{equation}
which gives $F(X, Y)=G(\lambda)$, where $\lambda=X^{2}+Y^{2}$, and $G(\lambda)$ is a similarity function of $\lambda$. Thus, the second reduction of the model \eqref{u} by the similarity transform gives:
\begin{equation}\label{1pa}
8 \lambda G^{\prime 2}+8 G G^{\prime}+ 8 \lambda G G^{\prime \prime}
-h G^\theta=0\text{.}
\end{equation}
Particular solution for Eq. \eqref{1pa} is as follows:
\begin{equation}
    G(\lambda)=\exp\left(\frac{\ln\left(\frac{16}{h (\theta - 2)^2\lambda}\right)}{\theta - 2}\right)\text{.}
\end{equation}
So, the invariant solution for the model \eqref{u} is as follows: 
\begin{equation}\label{561}
\phi(x,y,t)=\exp\left(\frac{\ln\left(\frac{16 c_3^2}{h (\theta - 2)^2 ((c_3 x-c_5)^2+(c_3 y+c_4)^2)}\right)}{\theta - 2}\right)\text{.}
\end{equation}
\begin{figure}[H]
\centering
\begin{minipage}{.3\textwidth}
    \subfloat[3D profile at $h=2$ and $\theta=0.555$]{\includegraphics[width=\textwidth]{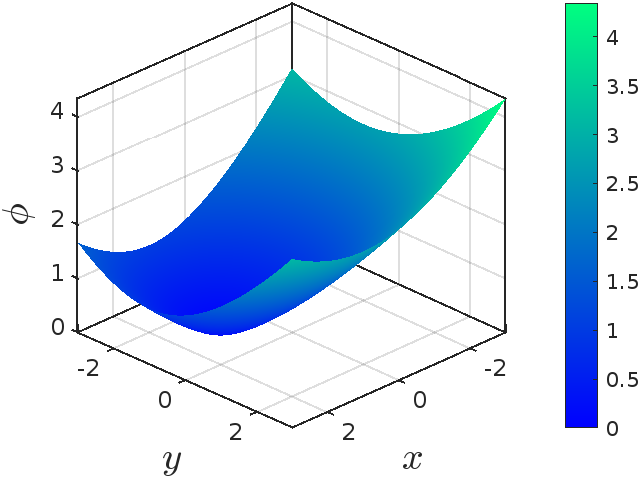}\label{fig:sub_4a}}
\end{minipage}
\quad 
\begin{minipage}{.3\textwidth}
    \subfloat[Contour sketch]{\includegraphics[width=\textwidth]{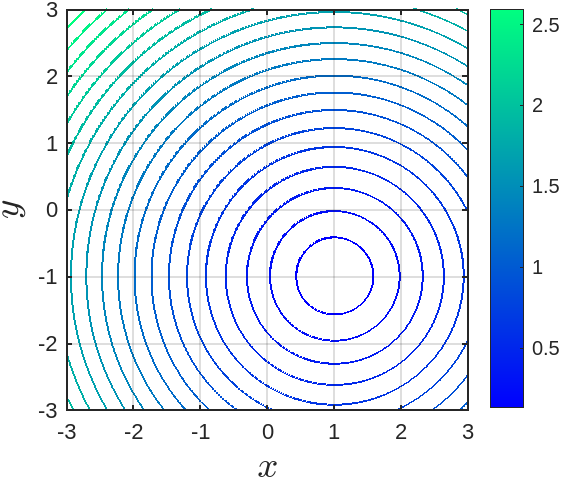}\label{fig:sub_4b}}
\end{minipage}
\quad \quad \quad \quad
\begin{minipage}{.3\textwidth}
    \subfloat[2D sketch for distinct $h$ at $\theta=0.555$ and $y=1$]{\includegraphics[width=\textwidth]{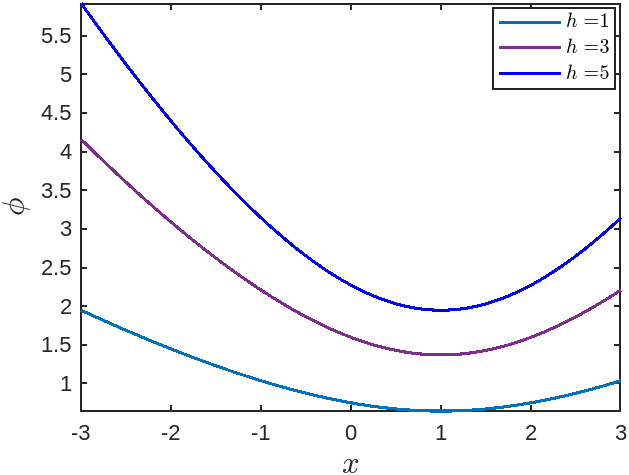}\label{fig:sub_4c}}
\end{minipage}
\quad 
\begin{minipage}{.3\textwidth}
    \subfloat[2D sketch for distinct $\theta$ at $h=2$ and $y=1$]{\includegraphics[width=\textwidth]{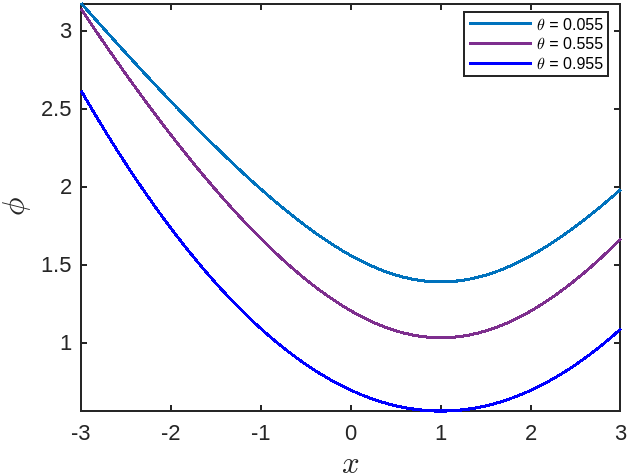}\label{fig:sub_4d}}
\end{minipage}
    \caption{Solution profile of density $\phi$ to Eq. \ref{561} for $c_3 = 1, c_4 = 1, c_5 = 1, h = 2,\hspace{0.1cm} \text{and}\hspace{0.1cm} \theta = 0.555$.}\label{fig:4}
\end{figure}

The solution of Eq. \eqref{561} is illustrated in Figure \ref{fig:4}. Figure \ref{fig:sub_4a} shows that the population density is highest in the central regions (depicted in red), gradually decreasing outward (shifting to blue). Figure \ref{fig:sub_4b} is the contour sketch for Eq. \eqref{561}. Figure \ref{fig:sub_4c} shows that a higher growth rate $h$ increases the overall population density, amplifying the peak. Figure \ref{fig:sub_4d} illustrates that a higher porous constant $\theta$ causes the population to spread out more, lowering the peak density. The porous constant $\theta$ influences how sharply the population spreads, while the growth rate $h$ controls the overall increase in population. This could represent natural phenomena like urban centers where resources or conditions are more favorable, leading to a dense population cluster. 
\subsubsection{\texorpdfstring{$c_2=0$}{c2 = 0}}
The similarity variables can be derived by solving the characteristic equations
\begin{equation}
\frac{d x}{c_1 \frac{(\theta-2) x}{2(\theta-1)}+c_3 y +c_4}=\frac{d y}{\frac{(\theta-2) y}{2(\theta-1)}-c_3 x+c_5}=\frac{d t}{c_1 t }=\frac{d \phi}{\frac{-c_1 \phi}{(\theta-1)}}\text{.}
\end{equation}
Then, we get
\begin{equation}
\begin{split}
& X=t^{\frac{(-\theta+2)}{2(\theta-1)}}\frac{[(u x+2 v (\theta-1))\cos (k \ln (t))-(u y + 2 w (\theta-1)) \sin (k \ln (t))]}{u},\\
& Y=t^{\frac{(-\theta+2)}{2(\theta-1)}}\frac{[(u y+2 w (\theta-1))\cos (k \ln (t))+(u x + 2 v (\theta-1)) \sin (k \ln (t))]}{u},\\
& \text{and} \hspace{0.1cm} \phi=t^{-\frac{1}{(\theta-1)}}\cdot F(X, Y)\text{,}
\end{split}
\end{equation}
where 
\begin{equation}
\begin{split}
&u=(\theta-2)^2 c_1^2+4 c_3^2 (\theta-1)^2\text{,}\quad v=c_4 (\theta-2) c_1-2 c_3 c_5 (\theta-1)\text{,}\quad w=c_5 (\theta-2) c_1+2 c_3 c_4 (\theta-1)\text{,} \quad\text{and} \hspace{0.2cm} k=\frac{c_3}{c_1}\text{.}
\end{split}
\end{equation}
Here, $X$ and $Y$ are two new independent variables and  $F$ is a new dependent variable. Therefore, the  Eq. \eqref{u} is converted into
\begin{equation}\label{rpa3}
\begin{split}
& h F^{\theta}-2\left[F_{X}^{2}+F_{Y}^{2}\right]+\frac{\left[X F_{X}+Y F_{Y}\right]}{2(\theta-1)}-2\left[F_{X X}+F_{Y Y}-\frac{1}{\theta-1}\right] F- (k Y+\frac{X}{2}) F_{X}+(k X-\frac{Y}{2})F_{y}=0\text{.} 
\end{split}
\end{equation}
Again, reducing Eq. \eqref{rpa3} by point symmetries, the following vector fields are found to span the symmetry group of Eq. \eqref{rpa3}
\begin{equation}
\xi_{X}=-d_{1} Y, \quad \xi_{Y}=d_{1} X, \quad \xi_{F}=0\text{,}
\end{equation}
where $d_{1}$ is an arbitrary constant. It leads to the following characteristic equations:
\begin{equation}
\frac{d X}{-d_{1} Y}=\frac{d Y}{d_{1} X}=\frac{d F}{0}\text{,}
\end{equation}
which gives $F(X, Y)=G(\lambda)$, where $\lambda=X^{2}+Y^{2}$, and $G(\lambda)$ is a similarity function of $\lambda$. Thus, the second reduction of the model \eqref{u} by the similarity transform gives:
\begin{equation}\label{bpa}
-h(\theta-1) G^{\theta}+8 \lambda(\theta-1) G^{\prime 2}+8(\theta-1) G G^{\prime}+\lambda(\theta-2) G^{\prime}+8  \lambda(\theta-1) G G^{\prime \prime}
+G=0
\end{equation}
For $h=0$, we find the solution for Eq. \eqref{bpa}.\\
\begin{equation}\label{zpa}
8 \lambda(\theta-1) G^{\prime 2}+8(\theta-1) G G^{\prime}+\lambda(\theta-2) G^{\prime}+ 8 \lambda(\theta-1) G G^{\prime \prime}
+G=0\text{.}
\end{equation}
Particular solution for Eq. \eqref{zpa} is as follows:
\begin{equation}
    G(\lambda)=-\frac{\lambda}{16}\text{.}
\end{equation}
So, the invariant solution for the model \eqref{u} is as follows: 
\begin{equation}\label{571}
    \phi(x,y,t)=-\frac{(u x+2 v (\theta-1))^{2}+(u y+2 w (\theta-1))^{2}}{16 u^2 t}\text{.}
\end{equation}
\begin{figure}[H]
\centering
\begin{minipage}{.3\textwidth}
    \subfloat[3D profile at $t=1$ and $\theta=0.5$]{\includegraphics[width=\textwidth]{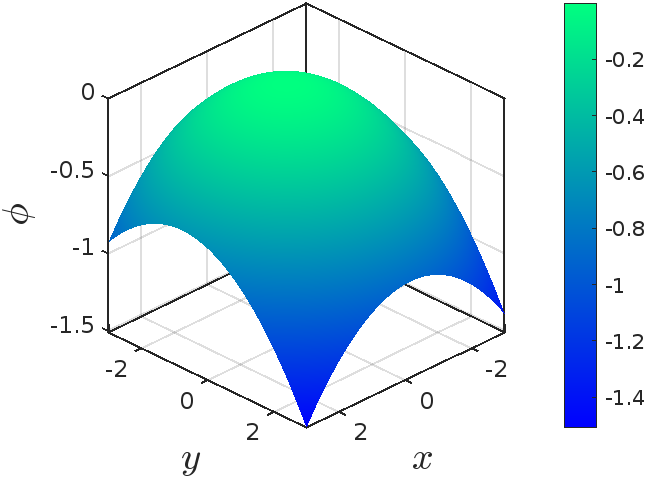}\label{fig:sub_5a}}
\end{minipage}
\quad
\begin{minipage}{.3\textwidth}
    \subfloat[Contour sketch]{\includegraphics[width=\textwidth]{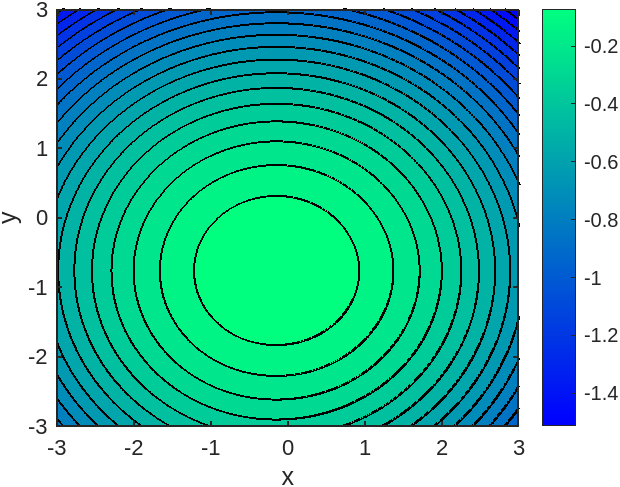}\label{fig:sub_5b}}
\end{minipage}
\quad \quad \quad \quad
\begin{minipage}{.3\textwidth}
    \subfloat[2D sketch for distinct $t$ at $\theta=0.5$ and $y=0$]{\includegraphics[width=\textwidth]{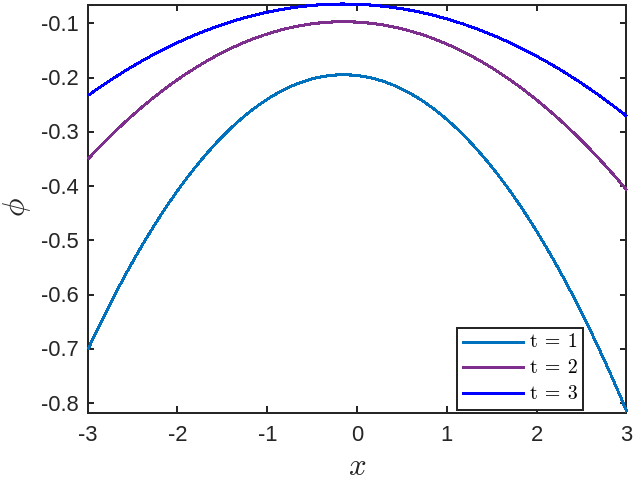}\label{fig:sub_5c}}
\end{minipage}
\quad
\begin{minipage}{.3\textwidth}
    \subfloat[2D sketch for distinct $\theta$ at $t=1$ and $y=0$]{\includegraphics[width=\textwidth]{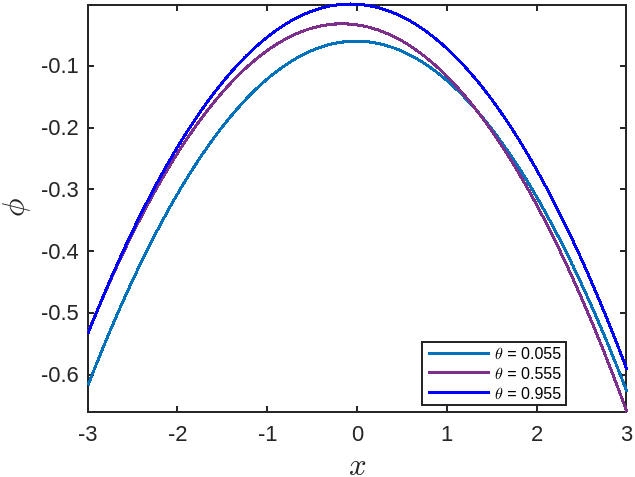}\label{fig:sub_5d}}
\end{minipage}
    \caption{Solution profile of density $\phi$ to Eq. \ref{571} for $c_1 = 1, c_3 = 1, c_4 = 1, c_5 = 1$.}\label{fig:5}
\end{figure}

The solution of Eq. \eqref{571} is illustrated in Figure \ref{fig:5}. Figure \ref{fig:sub_5a} shows how population density spreads through a porous medium over time. The result is a peak near the center with a gradual decline outward, indicative of typical diffusive behavior with anisotropic effects. Figure \ref{fig:sub_5b} is the contour sketch for Eq. \eqref{571}. Figure \ref{fig:sub_5c} shows the progression of the population density $\phi(x,t)$ along the $x$-axis as time progresses. The density starts highly concentrated at $t=1$ and gradually spreads out, becoming more uniform over time. Figure \ref{fig:sub_5d} illustrates the effect of the porous constant $\theta$ on the diffusion process. A low $\theta$ allows the substance (population, chemical, or heat) to diffuse more quickly and spread over a larger area, while a high $\theta$ restricts diffusion, keeping the substance more concentrated near its source. 
\subsubsection{\texorpdfstring{$c_3=0$}{c3 = 0}}
The similarity variables can be derived by solving the characteristic equations
\begin{equation}
\frac{d x}{\frac{c_1 (\theta-2) x}{2 (\theta-1)}+c_4}=\frac{d y}{\frac{c_1 (\theta-2) y}{2 (\theta-1)}+c_5}=\frac{d t}{c_1 t+c_2}=\frac{d \phi}{\frac{-c_{1} \phi}{(\theta-1)}}\text{.}
\end{equation}
Then, we get
\begin{equation}
\begin{split}
& X=\frac{\left( c_1 (\theta - 2) x + 2 c_4 (\theta - 1) \right) \left( c_1 t + c_2 \right)^{\frac{-\theta + 2}{2\theta - 2}}(\theta - 1)^{\frac{-\theta + 2}{2\theta - 2}}}{c_1 (\theta - 2)}\text{,}\\
& Y=\frac{\left( c_1 (\theta - 2) y + 2 c_5 (\theta - 1) \right)\left( c_1 t + c_2 \right)^{\frac{-\theta + 2}{2\theta - 2}}(\theta - 1)^{\frac{-\theta + 2}{2\theta - 2}}}{c_1 (\theta - 2)}
\text{,}\\
& \text{and} \hspace{0.1cm} \phi=(\theta-1)(c_1 t+c_2)^{\frac{-1}{\theta-1}}F(X, Y)\text{,}
\end{split}
\end{equation}
Here, $X$ and $Y$ are two new independent variables and  $F$ is a new dependent variable. Therefore, Eq. \eqref{u} is converted into
\begin{equation}\label{2pa}
h  F^{\theta}-2\left[F_{X}^{2}+F_{Y}^{2}\right]-\frac{c_1 (\theta-2)}{2}\left[X F_{X}+Y F_{Y}\right]-2\left[F_{X X}+F_{Y Y}\right] F -c_1 F=0\text{.} 
\end{equation}
Again, reducing Eq. \eqref{2pa} by point symmetries, the following vector fields are found to span the symmetry group of Eq. \eqref{2pa}:
\begin{equation}
\xi_{X}=d_{1} Y, \quad \xi_{Y}=-d_{1} X, \quad \xi_{F}=0\text{,}
\end{equation}
where $d_{1}$ is an arbitrary constant. It leads to the following characteristic equations:
\begin{equation}
\frac{d X}{d_{1} Y}=\frac{d Y}{-d_{1} X}=\frac{d F}{0}\text{,}
\end{equation}
which gives $F(X, Y)=G(\lambda)$, where $\lambda=X^{2}+Y^{2}$, and $G(\lambda)$ is a similarity function of $\lambda$. Thus, the second reduction of the model \eqref{u} by the similarity transform gives:
\begin{equation}\label{2pa2}
-h G^{\theta}+8 \lambda G^{\prime 2}+8 G G^{\prime}-c_1\lambda(\theta-2) G^{\prime}-c_1 G+8 \lambda G G^{\prime \prime}=0\text{.}
\end{equation}
Particular solution for Eq. \eqref{2pa} is as follows:
\begin{equation}
    G(\lambda)=\frac{c_1(\theta-1) \lambda}{16}\text{.}
\end{equation}
So, the invariant solution for the model \eqref{u} is as follows: 
\begin{equation}\label{579}
\phi(x,y,t)=\frac{\left[\left( c_1 (\theta - 2) x + 2 c_4 (\theta - 1) \right)^2+\left( c_1 (\theta - 2) y + 2 c_5 (\theta - 1) \right)^2\right]}{16 c_1 (c_1 t+c_2) (\theta-2)^2}(\theta-1)^{\frac{\theta}{\theta-1}}\text{.}
\end{equation}
\begin{figure}[H]
\centering
\begin{minipage}{.3\textwidth}
    \subfloat[3D profile at $t=1$ and $\theta=0.5$]{\includegraphics[width=\textwidth]{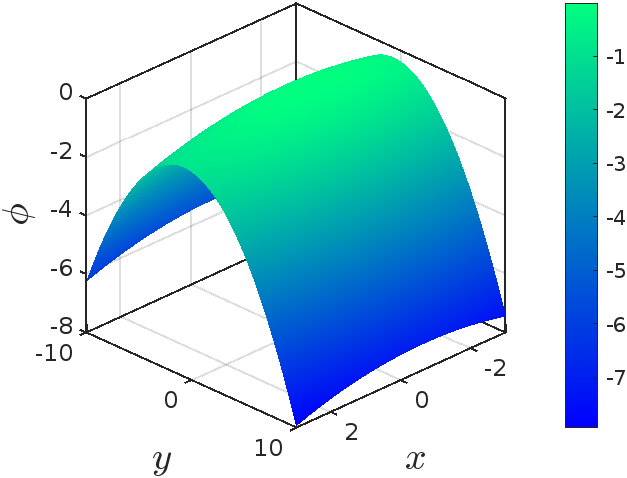}\label{fig:sub_6a}}
\end{minipage}
\hfill
\begin{minipage}{.3\textwidth}
    \subfloat[Contour sketch]{\includegraphics[width=\textwidth]{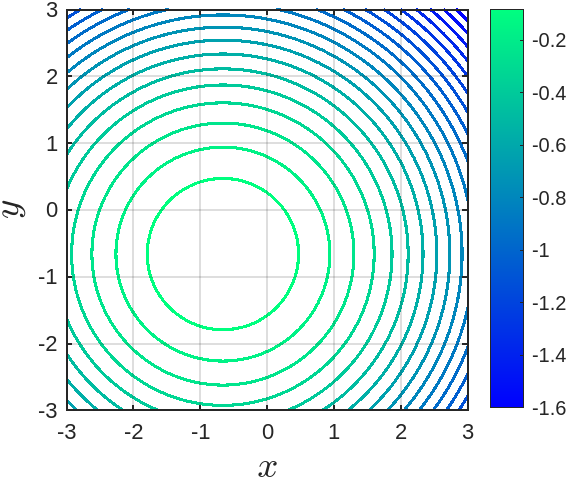}\label{fig:sub_6b}}
\end{minipage}
\hfill
\begin{minipage}{.3\textwidth}
    \subfloat[2D sketch for distinct $t$ at $\theta=0.9$ and $x=2$]{\includegraphics[width=\textwidth]{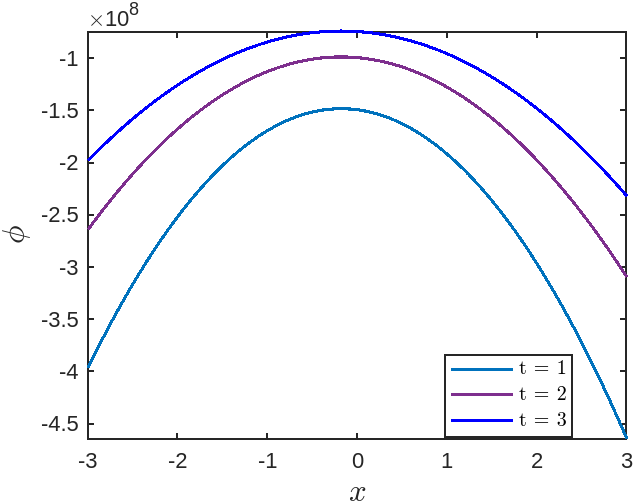}\label{fig:sub_6c}}
\end{minipage}
    \caption{Solution profile of density $\phi$ to Eq. \ref{579} for $c_1 = 1, c_2 = 1, c_4 = 1, c_5 = 1$.}\label{fig:6}
\end{figure}

The Solution of Eq. \eqref{579} is illustrated in Figure \ref{fig:6}. Figure \ref{fig:sub_6a} shows that the density distribution is non-uniform, with the central region having higher density, and this shape suggests population diffusion dynamics, where central populations disperse over time. Figure \ref{fig:sub_6b} is the contour sketch for Eq. \eqref{579}. Figure \ref{fig:sub_6c} reveals that as time progresses (from $t=1$ to $t=4$), the peak density increases and the distribution becomes more concentrated around $y=0$. This indicates a gathering or focusing of the population over time, with less dispersion occurring compared to earlier times.
\section{Nonlinear Self-Adjointness of Model}
\quad \quad Ibragimov \cite{ibragimov2011nonlinear} initially introduced the idea of nonlinear self-adjointness for a given system of partial differential equations (PDEs). This approach is highly efficient for systematically constructing conservation laws, regardless of whether the provided system of partial differential equations (PDEs) allows for a variational principle or not. In this section, we demonstrate that the given Eq. \eqref{u} is nonlinearly self-adjoint.
\begin{thm}
The given model Eq. \eqref{u} is nonlinearly self-adjoint.
\end{thm}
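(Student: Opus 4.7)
The plan is to follow Ibragimov's formal-Lagrangian construction. First, I would introduce an auxiliary dependent variable $v(x,y,t)$ and set $\mathcal{L}=v\,\Delta$, where $\Delta$ denotes the left-hand side of Eq.~\eqref{u}. The adjoint operator is then read off as $\Delta^{*}=\delta\mathcal{L}/\delta\phi$ via the Euler-Lagrange operator
$$\frac{\delta}{\delta\phi}=\frac{\partial}{\partial\phi}-D_{t}\frac{\partial}{\partial\phi_{t}}-D_{x}\frac{\partial}{\partial\phi_{x}}-D_{y}\frac{\partial}{\partial\phi_{y}}+D_{x}^{2}\frac{\partial}{\partial\phi_{xx}}+D_{y}^{2}\frac{\partial}{\partial\phi_{yy}}.$$

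Second, I would carry out the explicit differentiations. Because $\mathcal{L}$ is linear in $v$ and polynomial in $\phi$ and its derivatives, this step is mechanical: each partial is taken, then each is pushed under $D_{x},D_{y},D_{t}$ and the resulting monomials collected. The outcome, after the prolongation cancellations, is a compact expression for $\Delta^{*}$ that is linear in $v$ and its $(x,y,t)$-partial derivatives, with coefficients polynomial in $\phi$; before substitution no $\phi_{t}$ term survives in $\Delta^{*}$, and $\phi_{t}$ will re-enter only through $D_{t}\varphi$ once the substitution $v=\varphi(x,y,t,\phi)$ is made.

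Third, to establish nonlinear self-adjointness, I would seek a nontrivial substitution $v=\varphi(x,y,t,\phi)$ together with a multiplier $\lambda$ such that
$$\Delta^{*}|_{v=\varphi}=\lambda\,\Delta$$
holds as a jet-space identity. After replacing each partial derivative of $v$ in $\Delta^{*}$ by the corresponding total derivative of $\varphi$, the left-hand side becomes a polynomial in the jet variables $\phi_{t},\phi_{x},\phi_{y},\phi_{xx},\phi_{yy}$ together with the additional monomial $\phi^{\theta-1}$. I would then match this against $\lambda\Delta$ monomial-by-monomial: the $\phi_{t}$ and $\phi\phi_{xx}$ rows determine $\lambda$ in terms of $\varphi_{\phi}$; the $\phi_{x}^{2}$ row constrains $\varphi_{\phi\phi}$; the $\phi_{x}$ and $\phi_{y}$ rows force $\varphi_{x\phi}=\varphi_{y\phi}=0$; and the derivative-free part gives a Laplace-type equation in $(x,y)$ for $\varphi$ together with an algebraic constraint coming from the porous source $h\phi^{\theta}$.

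Solving this over-determined linear system would then produce an explicit admissible substitution $\varphi\not\equiv 0$ and its multiplier $\lambda$, which proves the theorem. The main obstacle I expect is the matching of the source contribution: the monomial $h\phi^{\theta}$ in $\Delta$ and the companion $h\theta\,\varphi\,\phi^{\theta-1}$ arising in $\Delta^{*}|_{v=\varphi}$ carry distinct fractional powers of $\phi$ (since $0<\theta<1$), so the identity imposes a compatibility condition linking $\varphi$ with $h$ and $\theta$. Reconciling this compatibility — rather than the combinatorial bookkeeping of the prolongation — is the genuine analytical content of the proof.
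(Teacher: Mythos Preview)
Your plan is correct and essentially identical to the paper's: formal Lagrangian $\mathcal{L}=v\,\Delta$, adjoint via the Euler--Lagrange operator, substitution $v=\varphi(x,y,t,\phi)$, multiplier $\lambda=-\varphi_{\phi}$ read off from the $\phi_{t}$ coefficient, then monomial matching to obtain an over-determined linear system for $\varphi$. The paper's determining system forces $\varphi_{\phi}=0$, $\varphi_{t}=0$, and $\varphi_{xx}+\varphi_{yy}=0$, with explicit substitution $\varphi=(c_{1}y+c_{2})x+c_{3}y+c_{4}$; so the source-term obstacle you rightly flag is dispatched by a $\phi$-independent harmonic multiplier rather than by any balancing of the fractional powers.
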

\begin{proof}
The adjoint equation corresponding to \eqref{u} can be written according to the procedure mentioned in \cite{ibragimov2011nonlinear}
\begin{equation}
S=\frac{\delta L}{\delta \phi}= h  \theta \phi^{\theta - 1} \Psi   - 2  \phi \frac{\partial^2 \Psi}{\partial x^2} - 2  \phi  \frac{\partial^2 \Psi}{\partial y^2} - \frac{\partial \Psi}{\partial t}\text{,}
\end{equation}
where $L$ is the formal Lagrangian defined as $L=\Psi(x,y,t) \Delta$, where\hspace{0.1cm} $\Delta=\phi_{t}-2 \phi_{x}^{2}-2 \phi  \phi_{x x}-2 \phi_{y}^{2}-2 \phi  \phi_{y y}+h \phi^{\theta}$ and $\Psi(x,y,t)$ is the dependent variable. The Euler-Lagrange operator $\frac{\delta }{\delta \phi}$ with respect to $\phi$ is as follows:
\begin{equation}
\frac{\delta}{\delta \phi} = \frac{\partial}{\partial \phi} - D_t \frac{\partial}{\partial \phi_t} - D_x \frac{\partial}{\partial \phi_x} - D_y \frac{\partial}{\partial \phi_y} + D_x^2 \frac{\partial}{\partial \phi_{xx}} + D_x D_y \frac{\partial}{\partial \phi_{xy}} + D_y^2 \frac{\partial}{\partial \phi_{yy}} + \cdots ,
\end{equation}
where $D_{x}$, $D_{y}$ and $D_{t}$ are total differential operators with respect to $x,y \hspace{0.1cm} \text{and} \hspace{0.1cm} t$ as given below:
\begin{equation}
   \begin{split}
      & D_x = \frac{\partial}{\partial x} + \phi_x \frac{\partial}{\partial \phi} + \phi_{xx} \frac{\partial}{\partial \phi_x} + \phi_{xy} \frac{\partial}{\partial \phi_y} + \phi_{xt} \frac{\partial}{\partial \phi_t} + \cdots ,\\
& D_y = \frac{\partial}{\partial y} + \phi_y \frac{\partial}{\partial \phi} + \phi_{yy} \frac{\partial}{\partial \phi_y} + \phi_{xy} \frac{\partial}{\partial \phi_x} + \phi_{yt} \frac{\partial}{\partial \phi_t} + \cdots ,\\
& D_t = \frac{\partial}{\partial t} + \phi_t \frac{\partial}{\partial \phi} + \phi_{tt} \frac{\partial}{\partial \phi_t} + \phi_{xt} \frac{\partial}{\partial \phi_x} + \phi_{yt} \frac{\partial}{\partial \phi_y} + \cdots .
   \end{split} 
\end{equation}
The given Eq. \eqref{u} is nonlinearly self-adjoint \cite{ibragimov2011nonlinear} if it satisfies the following condition: 
\begin{equation}\label{e61}
    S \mid_{(\Psi=G(x,y,t,\phi))} = \delta_{1} \Delta\text{,}
\end{equation}
where $G(x,y,t,\phi)$ is nonzero function, $\delta_{1}$ is unknown to be determined by equating coefficient of $\phi_{t}$ to zero and we obtain
\begin{equation}
    \delta_{1}=-\Psi_{\phi}\text{.}
\end{equation}
Substituting the value of $\delta_{1}$ into \eqref{e61}, one can obtain
\begin{equation}\label{e63}
\Psi=(c_{1} y+c_{2}) x+c_{3} y + c_{4}\text{,}
\end{equation}
where $c_{i}, i=1,2,3,4$ are the arbitrary constants. Hence, the given Eq. \eqref{u} is nonlinearly self-adjoint under \eqref{e63}.
\end{proof}
\section{Conservation Laws and Its Application}
\quad \quad Conservation laws are crucial in solving problems where certain physical properties remain unchanged over time. In mathematics, they are linked to the integrability of PDE systems and offer insights into the physical processes they model. These laws aid in analyzing the existence, uniqueness, stability, and development of numerical methods. Additionally, conservation laws can introduce nonlocal variables, leading to a nonlocally related PDE system for examining nonlocal symmetries \cite{sil2020nonlocal}.
\subsection{Costruction of Conservation Laws Using Nonlinear Self-Adjointness}
\quad \quad Ibragimov \cite{ibragimov2011nonlinear} provided an explicit formulation for the construction of conserved vectors for each associated symmetry. Ibragimov's method produces a specific subclass of conservation laws, whereas the multipliers method is the most general and produces all possible conservation laws. Given that the most general method by Anco appears to be unsuccessful for the governing model, we explicitly construct the conservation laws from each symmetry.\\
Let $\eta=(\eta^{x}, \eta^{y}, \eta^{t})$ be the conserved vector corresponding to $x,y$ and $t$, satisfying the conservation form
\begin{equation}
    D_{x} \eta^{x}+D_{y} \eta^{y}+D_{t} \eta^{t}=0\text{,}
    \end{equation}
and generated by the infinite symmetry 
\begin{equation}
    \mathbb{Z}=\xi_{1}\frac{\partial}{\partial x}+\xi_{2}\frac{\partial}{\partial y}+\xi_{3}\frac{\partial}{\partial t}+\xi_{4} \frac{\partial}{\partial \phi}\text{,}
\end{equation}
where the conserved vectors $\eta^{x},\eta^{y}$ and $\eta^{t}$ can be obtained from ideas developed in \cite{ibragimov2011nonlinear}
\begin{equation}\label{e766}
\begin{split}
& \eta^{x}=\xi_{1} L+\omega \frac{\partial L}{\partial \phi_{x}}-\omega D_{x} (\frac{\partial L}{\partial \phi_{xx}})+\frac{\partial L}{\partial \phi_{xx}}D_{x}(\omega)\text{,}\\
& \eta^{y}= \xi_{2} L+\omega \frac{\partial L}{\partial \phi_{y}}-\omega D_{y} (\frac{\partial L}{\partial \phi_{yy}})+\frac{\partial L}{\partial \phi_{yy}}D_{y}(\omega)\text{,}\\
& \eta^{t}=\xi_{3} L+\omega \frac{\partial L}{\partial \phi_{t}}\text{,}
\end{split}
\end{equation}
with $\omega=\xi_{4}-\xi_{1}\phi_{x}-\xi_{2}\phi_{y}-\xi_{3}\phi_{t}$. Furthermore, we evaluate the conserved vectors for the infinitesimal generators $X_{1}, X_{2}, X_{3}$ and $X_{4}$, as in the following cases:
\subsubsection{\texorpdfstring{$X_{1}= \gamma x \partial_{x}+ \gamma y \partial_{y}+t \partial_{t}- \tau \phi \partial_{\phi}$}{X1 = gamma x ∂x + gamma y ∂y + t ∂t - phi tau ∂phi}}
where $\frac{1}{\theta - 1}=\tau \hspace{0.2cm}\text{and} \hspace{0.2cm}\frac{(\theta - 2)}{2 (\theta - 1)}=\gamma$.
Using the vector $X_{1}$, we can obtain $\omega$ as follows:
\begin{equation}
\omega=-\tau \phi - \gamma x \phi_{x} - \gamma y \phi_{y} - t \phi_{t}\text{.}
\end{equation}
Using $\omega$ in Eq. \eqref{e766}, we can obtain $\eta^{x}, \eta^{y}$ and $\eta^{t}$ as follows:
\begin{equation}
\begin{split}            
\eta^{x}_{1}=& 2 \gamma x (c_1 x y + c_2 x + c_3 y + c_4) (\phi \phi_{xx}+ \phi_{x}^2 )+ 2(c_1 x y + c_2 x + c_3 y + c_4)(t \phi \phi_{xt} + \gamma y \phi \phi_{xy}\\
&+ t \phi_{t} \phi_{x} + \gamma y \phi_{y} \phi_{x} +2 \tau \phi \phi_{x})+ 2 \gamma (c_3 y+ c_4) \phi \phi_{x}-2 (c_1 y+ c_2)(2 t \phi \phi_{t} + \gamma y \phi \phi_{y}+2 \phi^2)\text{,}\\
\eta^{y}_{1}=&2 (c_1 x y + c_2 x + c_3 y + c_4)(\gamma (x \phi \phi_{xx}+ y \phi_{y}^2+ x \phi_{y} \phi_{x}+ y \phi \phi_{xy}+\phi \phi_{x})+t \phi_{t} \phi_{y}+ \tau \phi \phi_{y}\\
&+ t \phi \phi_{tx}+ \tau \phi \phi_{x})-2 \phi(c_1 x +c_3)(\gamma y \phi_{y}+t \phi_{t}+ \gamma x \phi_{x}+\tau \phi )\text{,}\\
\eta^{t}_{1}=&-(c_1 x y + c_2 x + c_3 y + c_4) \left( \tau \phi +\gamma x \phi_{x}+\gamma y \phi_{y}+t \phi_{t} \right)\text{.}
\end{split}
\end{equation}
\subsubsection{\texorpdfstring{$X_{2}=\partial_{t}$}{X2 = ∂t}}
Using the vector $X_{2}$, we can obtain $\omega$ as follows:
\begin{equation}
    \omega= - \phi_{t}\text{.}
\end{equation}
Using $\omega$ in Eq. \eqref{e766}, we can obtain $\eta^{x}, \eta^{y}$ and $\eta^{t}$ as follows:
\begin{equation}
\begin{split}
&\eta^{x}_{2}=(c_1 x y + c_2 x + c_3 y + c_4)(2 \phi_{x} \phi_{t} + 2  \phi \phi_{tx} ) -2(c_1 y + c_2) \phi\text{,}\\
&\eta^{y}_{2}=(c_1 x y + c_2 x + c_3 y + c_4)(2 \phi_{y} \phi_{t} + 2  \phi \phi_{ty} ) -2(c_1 x + c_3) \phi\text{,}\\
& \eta^{t}_{2}= -(c_1 x y + c_2 x + c_3 y + c_4) \phi_{t}\text{.}
    \end{split}
\end{equation}
\subsubsection{\texorpdfstring{$X_{3}=y \partial x - x \partial y$}{X3 = y ∂x - x ∂y}}
Using the vector $X_{3}$, we can obtain $\omega$ as follows:
\begin{equation}
    \omega=-y \phi_{x} +x \phi_{y}\text{.}
\end{equation}
Using $\omega$ in Eq. \eqref{e766}, we can obtain $\eta^{x}, \eta^{y}$ and $\eta^{t}$ as follows:
\begin{equation}
\begin{split}
& \eta^{x}_{3}=(c_1 x y + c_2 x + c_3 y + c_4) (-2 x \phi\phi_{xy} + 2 y \phi \phi_{xx}+2 y \phi_{x}^{2}-2 x \phi_{x} \phi_{y}) -2 \phi(c_3 y+c_4) \phi_{y}-2 y \phi (c_1 y + c_2) \phi_{x}\text{,}\\
& \eta^{y}_{3}=(c_1 x y + c_2 x + c_3 y + c_4) (-2 x \phi\phi_{yy} + 2 y \phi \phi_{xy}-2 x \phi_{y}^{2} +2 y \phi_{y} \phi_{x}) +2 \phi(c_2 x+c_4) \phi_{x}+2 x \phi (c_1 x + c_3) \phi_{y}\text{,}\\
& \eta^{t}_{3}=(c_1 x y + c_2 x + c_3 y + c_4)(-y \phi_{x}+x \phi_{y})\text{.}
    \end{split}
\end{equation}
\subsubsection{\texorpdfstring{$X_{4}=\partial x$}{X4 = ∂x}}
Using the vector $X_{4}$, we can obtain $\omega$ as follows:
\begin{equation}
    \omega=-\phi_{x}\text{.}
\end{equation}
Using $\omega$ in Eq. \eqref{e766}, we can obtain $\eta^{x}, \eta^{y}$ and $\eta^{t}$ as follows:
\begin{equation}
\begin{split}
&\eta^{x}_{4}=(c_1 x y + c_2 x + c_3 y + c_4)( 2 \phi_{x}^{2}+ 2 \phi \phi_{xx})-2 \phi \phi_{x} (c_1 y+ c_2)\text{,}\\
&\eta^{y}_{4}=2(c_1 x y + c_2 x + c_3 y + c_4)( \phi_{x} \phi_{y}+\phi \phi_{xy})-2\phi \phi_{x}(c_1 x+ c_3)\text{,}\\
&\eta^{t}_{4}=-(c_1 x y + c_2 x + c_3 y + c_4)\phi_{x}\text{.}
\end{split}
\end{equation}
\subsubsection{\texorpdfstring{$X_{5}=\partial y$}{X5 = ∂y}}
Using the vector $X_{5}$, we can obtain $\omega$ as follows:
\begin{equation}
    \omega=-\phi_{y}\text{.}
\end{equation}
Using $\omega$ in Eq. \eqref{e766}, we can obtain $\eta^{x}, \eta^{y}$ and $\eta^{t}$ as follows:
\begin{equation}
\begin{split}
&\eta^{x}_{5}=2(c_1 x y + c_2 x + c_3 y + c_4)( \phi_{x} \phi_{y}+\phi \phi_{xy})-2\phi \phi_{y}(c_1 y+ c_3)\text{,}\\
&\eta^{y}_{5}=(c_1 x y + c_2 x + c_3 y + c_4)(2 \phi_{y}^{2}+ 2 \phi \phi_{yy})-2 \phi \phi_{y} (c_1 x+ c_3)\text{,}\\
&\eta^{t}_{5}=-(c_1 x y + c_2 x + c_3 y + c_4)\phi_{y}\text{.}
    \end{split}
\end{equation}
\subsection{Applications}
\quad \quad From the nonlocally related partial differential equations (PDEs), one can derive nonlocal symmetries, exact solutions, and nonlocal conservation laws. However, generating these nonlocally related PDEs becomes significantly more complex for higher-dimensional cases (where \( n \geq 3 \)). Unlike the potential system for \( n = 2 \), the potential system for \( n \geq 3 \) is under-determined and therefore exhibits gauge freedom (for more details, refer to \cite{cheviakov2010multidimensional}). To remove this gauge freedom and ensure that the potential systems are completely defined, it is necessary to insert an extra equation that incorporates potential variables, which is referred to as a gauge constraint.
For instance, we will assume a conservation law that exhibits divergence in the $(x, y, z)$ space.

$$
\text{div}\hspace{0.1cm}\mathbb{T}=T^{1}_{x}+T^{2}_{y}+T^{3}_{z},
$$
where $\mathbb{T}(x,y,z)=(T^{1},T^{2},T^{3})$ denotes the flux vector. Consequently, a corresponding vector potential $J(x,y,z)=(J^{1},J^{2},J^{3})$ satisfies $\mathbb{T}=\text{curl}\hspace{0.1cm}J$. Therefore, the potential system can be formulated as follows:
\begin{equation}\label{787}
    \begin{split}
    &J^{3}_{y}-J^{2}_{z}=T^{1},\\
    &J^{1}_{z}-J^{3}_{x}=T^{2},\\
    &J^{2}_{x}-J^{1}_{y}=T^{3}\text{.}
    \end{split}
\end{equation}
To establish the desired potential systems and remove gauge freedom, it is crucial to include an additional equation involving potential variables, referred to as gauge constraints. The system \eqref{787} is inherently under-determined, requiring a gauge constraint to eliminate the gauge freedom and extract all possible solutions. Various gauge constraint options are available for this purpose, including the following:
\begin{enumerate}
    \item[1] Divergence (Coulomb) gauge: $\text{div} \hspace{0.1cm} J=J^{1}_{x}+J^{2}_{y}+J^{3}_{z}=0$,
    \item[2] Spatial gauge: $J^{i}=0,\hspace{0.2cm} i=1,2,3$,
    \item[3] Poincare gauge: $x J^{1}+y J^{2}+z J^{3}=0$.
\end{enumerate}
While dealing with PDE that involves the time coordinate t, our typical reliance lies in the (2+1) dimension as follows:
\begin{enumerate}
    \item[1] Lorentz gauge: $J^{1}_{t}-J^{2}_{x}-J^{3}_{y}=0$,
    \item[2] Cronstrom gauge:  $t J^{1}-x J^{2}-y J^{3}=0$.
\end{enumerate}
We list the possible potential systems for PDE \eqref{u} using conserved vectors from the above section.
\begin{equation}
\begin{split}
\mathcal{Y}_1:\hspace{0.1cm} J^{3}_{x}-J^{2}_{y}= &-(c_1 x y + c_2 x + c_3 y + c_4) \left( \tau \phi+\gamma x \phi_{x}+\gamma y \phi_{y}+t \phi_{t} \right) \text{,} \\
 J^{1}_{y}-J^{3}_{t}= &2 \gamma x (c_1 x y + c_2 x + c_3 y + c_4) (\phi \phi_{xx}+ \phi_{x}^2 )+ 2(c_1 x y + c_2 x + c_3 y + c_4)(t \phi \phi_{xt} + \gamma y \phi \phi_{xy}\\
&+ t \phi_{t} \phi_{x} + \gamma y \phi_{y} \phi_{x} +2 \tau \phi \phi_{x})+ 2 \gamma (c_3 y+ c_4) \phi \phi_{x}-2 (c_1 y+ c_2)(2 t \phi \phi_{t} + \gamma y \phi \phi_{y}+2 \phi^2)\text{,}\\
J^{2}_{t}-J^{1}_{x}=& 2 (c_1 x y + c_2 x + c_3 y + c_4)(\gamma (x \phi \phi_{xx}+ y \phi_{y}^2+ x \phi_{y} \phi_{x}+ y \phi \phi_{xy}+\phi \phi_{x})+t \phi_{t} \phi_{y}+ \tau \phi \phi_{y}\\
&+ t \phi \phi_{tx}+ \tau \phi \phi_{x})-2 \phi(c_1 x +c_3)(\gamma y \phi_{y}+t \phi_{t}+ \gamma x \phi_{x}+\tau \phi ) \text{.}
\end{split}
\end{equation}
\begin{equation}
    \begin{split}
\mathcal{Y}_2:\hspace{0.1cm}  J^{3}_{x}-J^{2}_{y}=&-(c_1 x y + c_2 x + c_3 y + c_4)\phi_{t},\\
J^{1}_{y}-J^{3}_{t}=&(c_1 x y + c_2 x + c_3 y + c_4)(2 \phi_{x} \phi_{t} + 2  \phi \phi_{tx} ) -2(c_1 y + c_2) \phi,\\
J^{2}_{t}-J^{1}_{x}=&(c_1 x y + c_2 x + c_3 y + c_4)(2 \phi_{y} \phi_{t} + 2  \phi \phi_{ty} ) -2(c_1 x + c_3) \phi\text{.}
\end{split}
\end{equation}
\begin{equation}
    \begin{split}
\mathcal{Y}_3:\hspace{0.1cm}  J^{3}_{x}-J^{2}_{y}=&(c_1 x y + c_2 x + c_3 y + c_4)(-y \phi_{x}+x \phi_{y}),\\
J^{1}_{y}-J^{3}_{t}=&(c_1 x y + c_2 x + c_3 y + c_4) (-2 x \phi\phi_{xy} + 2 y \phi \phi_{xx}+2 y \phi_{x}^{2}-2 x \phi_{x} \phi_{y}) -2 \phi(c_3 y+c_4) \phi_{y}-2 y \phi (c_1 y + c_2) \phi_{x},\\
J^{2}_{t}-J^{1}_{x}=&(c_1 x y + c_2 x + c_3 y + c_4) (-2 x \phi\phi_{yy} + 2 y \phi \phi_{xy}-2 x \phi_{y}^{2} +2 y \phi_{y} \phi_{x}) +2 \phi(c_2 x+c_4) \phi_{x}+2 x \phi (c_1 x + c_3) \phi_{y}\text{.}
\end{split}
\end{equation}
\begin{equation}
    \begin{split}
\mathcal{Y}_4:\hspace{0.1cm} J^{3}_{x}-J^{2}_{y}=&-(c_1 x y + c_2 x + c_3 y + c_4)\phi_{x},\\
J^{1}_{y}-J^{3}_{t}=&(c_1 x y + c_2 x + c_3 y + c_4)( 2 \phi_{x}^{2}+ 2 \phi \phi_{xx})-2 \phi \phi_{x} (c_1 y+ c_2),\\
J^{2}_{t}-J^{1}_{x}=&2(c_1 x y + c_2 x + c_3 y + c_4)( \phi_{x} \phi_{y}+\phi \phi_{xy})-2\phi \phi_{x}(c_1 x+ c_3)\text{.}
\end{split}
\end{equation}
\begin{equation}
    \begin{split}
\mathcal{Y}_5:\hspace{0.1cm} J^{3}_{x}-J^{2}_{y}=&-(c_1 x y + c_2 x + c_3 y + c_4)\phi_{y},\\
J^{1}_{y}-J^{3}_{t}=&2(c_1 x y + c_2 x + c_3 y + c_4)( \phi_{x} \phi_{y}+\phi \phi_{xy})-2\phi \phi_{y}(c_1 y+ c_3),\\
J^{2}_{t}-J^{1}_{x}=&(c_1 x y + c_2 x + c_3 y + c_4)(2 \phi_{y}^{2}+ 2 \phi \phi_{yy})-2 \phi \phi_{y} (c_1 x+ c_3)\text{.}
\end{split}
\end{equation}
In the future study, it will be quite interesting to investigate nonlocal symmetries \eqref{u} emerging from $\mathcal{Y}_1, \mathcal{Y}_2, \mathcal{Y}_3, \mathcal{Y}_4, \mathcal{Y}_5$ as well as inverse potential
systems and extract some new exact solutions.
\section{Conclusions}
\begin{sloppypar}
\quad \quad This paper presents a comprehensive study of the biological population model in porous media, employing mathematical techniques such as Lie symmetry analysis, optimal systems, and the construction of conservation laws. The analysis begins by identifying the Lie symmetries of the governing equations, enabling the simplification of the complex dynamics of population distribution in porous environments. These symmetries are used to derive invariant solutions, which provide a clearer understanding of how populations evolve under various environmental conditions, including regional heterogeneity, diffusion, and nonlinear interactions.

The research further develops an optimal system, categorizing the symmetries into equivalence classes to streamline the process of identifying group-invariant solutions. This step is crucial in understanding population behaviors over time, especially in scenarios where populations are influenced by factors such as resource availability, migration, and environmental constraints.

The conservation laws derived from the model offer additional insights into the underlying physical principles that govern population changes, such as mass and energy balance. The study also highlights how controlling key parameters, like the porous constant, can significantly influence population density and distribution, offering practical guidance for managing biological systems in real-world contexts.

Ultimately, this work provides a valuable contribution to both the theoretical and practical aspects of population dynamics in porous media. By bridging the gap between advanced mathematical methods and their biological applications, the paper equips researchers and practitioners with powerful tools to analyze, predict, and manage population growth and distribution in diverse ecological settings. This research can guide efforts in areas like habitat conservation, species management, and sustainability planning, helping to foster a balanced coexistence between biological populations and their environments.
\end{sloppypar}
\section{Future Recommendation}
\quad \quad The work offers valuable insights into the mechanisms and stabilizing of population dynamics, shedding light on the fundamental ecological processes involved in porous media. Subsequent research endeavors should encompass numerical simulations and visualization, empirical validation, extension to multivariate models, machine learning techniques for handling large datasets, generalization of conservation laws, and mathematical modeling to enhance the results of the paper and propel the field of mathematical biology and population dynamics forward.
\section{Acknowledgements}
\quad \quad The first author, Urvashi Joshi would like to express deepest gratitude to Aniruddha Kumar Sharma, the co-author, for his invaluable direction, support, and motivation throughout this research. She also acknowledges the financial support from the Ministry of Education (MoE). The second author, Aniruddha Kumar Sharma, acknowledges the financial support from the Council of Scientific and Industrial Research (CSIR) India, Grant/Award Number:09/0143(12918)/2021-EMR-I.
\section{Conflict of Interest}
According to the author's report, there is no conflict of interest associated with this article.
\bibliographystyle{elsarticle-num}
\bibliography{ref.bib}
\end{document}